\begin{document}

\newcommand{\inner}[2]{\left\langle #1, #2 \right\rangle}
\newcommand{\g}[1]{\boldsymbol{#1}}
\newcommand{\gd}[1]{\mathbf{#1}}
\newcommand{\drond}[2]{\frac{\partial #1}{\partial #2}}
\newcommand{\eps}[0]{\varepsilon}
\newcommand{\R}[0]{\mathbb{R}} 
\newcommand{\E}[0]{\mathbb{E}} 
\newcommand{\N}[0]{\mathcal{N}} 
\newcommand{\1}[0]{\mathbf{1}} 
\newcommand{\I}[1]{\boldsymbol{1}_{#1}}
\renewcommand{\H}[0]{\mathcal{H}} 
\newcommand{\X}[0]{\mathcal{X}} 
\newcommand{\Y}[0]{\mathcal{Y}} 
\newcommand{\U}[0]{\mathcal{U}} 
\newcommand{\V}[0]{\mathcal{V}} 
\newcommand{\C}[0]{\mathcal{C}}
\newcommand{\F}[0]{\mathcal{F}} 
\newcommand{\G}[0]{\mathcal{G}} 
\renewcommand{\L}[0]{\mathcal{L}} 
\newcommand{\Z}[0]{\mathcal{Z}} 
\newcommand{\rkhs}[0]{\left(\H, \inner{\cdot}{\cdot}_{\H}\right)}
\renewcommand{\S}[0]{\mathcal{S}} 
\newcommand{\D}[0]{\mathcal{D}} 

\newcommand{\sign}[0]{\mbox{sign}} 
\newtheorem{theorem}{Theorem}
\newtheorem{corollary}{Corollary}
\newtheorem{lemma}{Lemma}
\newtheorem{problem}{Problem}
\newtheorem{definition}{Definition}
\newtheorem{proposition}{Proposition}
\newtheorem{example}{Example}
\newtheorem{remark}{Remark}
\newcommand{\argmax}{\operatornamewithlimits{argmax}}
\newcommand{\argmin}{\operatornamewithlimits{argmin}}

\newenvironment{proof}{\paragraph*{Proof}}{\par\hfill$\square$\par}

\title{Margin theory for the scenario-based approach to robust optimization in high dimension}

\author{Fabien Lauer
\thanks{F. Lauer is with the Universit\'e de Lorraine, LORIA, CNRS, Nancy, France (e-mail: fabien.lauer@loria.fr). }
}

\maketitle

\begin{abstract}
This paper deals with the scenario approach to robust optimization. This relies on a random sampling of the possibly infinite number of constraints induced by uncertainties in the parameters of an optimization problem. Solving the resulting random program yields a solution for which the quality is measured in terms of the probability of violating the constraints for a random value of the uncertainties, typically unseen before. Another central issue is the determination of the sample complexity, i.e., the number of random constraints (or scenarios) that one must consider in order to guarantee a certain level of reliability. In this paper, we introduce the notion of margin to improve upon standard results in this field. In particular, using tools from statistical learning theory, we show that the sample complexity of a class of random programs does not explicitly depend on the number of variables. In addition, within the considered class, that includes polynomial constraints among others, this result holds for both convex and nonconvex instances with the same level of guarantees. We also derive a posteriori bounds on the probability of violation and sketch a regularization approach that could be used to improve the reliability of computed solutions on the basis of these bounds.
\end{abstract}

\begin{IEEEkeywords}
Scenario approach, Robust optimization, Randomized algorithms, Statistical learning
\end{IEEEkeywords}

\maketitle

\section{Introduction}
\label{sec:introduction}

\IEEEPARstart{R}{obust} optimization is concerned with parametrized optimization problems with uncertainties on the parameters. The worst-case approach to these problems aims at computing a solution that satisfies the constraints for all possible values of the parameters within the uncertainty set. However, this often leads to an infinite number of constraints that cannot always be handled in practice. Instead, we focus on the scenario approach, in which the uncertainties are sampled to generate a finite number of random constraints. Then, the issue is to determine the probability with which the corresponding solution satisfies the constraints, or, conversely, how many samples should be drawn to guarantee a certain upper bound on the probability of violation. 

The scenario approach to robust optimization has a long history \cite{Tempo13,Campi08,Alamo09,Calafiore10,Esfahani14,Campi18b,Campi18,Garatti22}, also tightly connected with the field of robust control \cite{Calafiore06}.
This paper derives new results inspired by statistical learning theory, thus following the path initiated by \cite{Alamo09}. However, while \cite{Alamo09} focused solely on the VC-dimension and early tools from learning theory, we introduce a refined analysis based on more recent developments in this field. More precisely, the approach of \cite{Alamo09} focuses on whether the scenario constraints are satisfied or not. Instead, we introduce the concept of margin to measure the amount by which a scenario constraint is satisfied and then analyze the random programs through a capacity measure known as the Rademacher complexity \cite{Koltchinskii02,Bartlett02}, which takes into account the magnitude of the constraint function in addition to its sign. 

\subsection{Contributions}

The main contributions of the paper can be summarized as follows.

\subsubsection{Dimension-free a priori bounds} 
We derive bounds on the probability of violation that do not explicitly depend on the dimension $d$ of the optimization problem (the number of variables), and thus show that the sample complexity of a random problem is not directly related to its dimension. This stands in contrast to most of the literature. For instance, for convex problems, the sample complexity derived in \cite{Campi08,Calafiore10} grows linearly with the dimension. Extensions of these works to nonconvex problems suffer from the same issue. For instance, the approach of \cite{Esfahani14} relies on decomposing the nonconvex domain as a union of convex ones, and leads to bounds also linear in $d$ for integer programming. Other works \cite{Campi18,Campi18b,Garatti22} provide bounds on the probability of violation that do not involve $d$, and instead characterize the complexity in terms of the a posteriori length of the support subsample. However, these bounds only apply after the solution has been computed and cannot be used to estimate the sample complexity. In contrast, we provide a priori bounds on the probability of violation and sample complexity estimates. 

\subsubsection{Convex problems}
Random convex programs were thoroughly studied in \cite{Campi08,Calafiore10}. In particular, a priori bounds on the probability of violation in $O(d/N)$ for problems with $d$ variables and $N$ scenarios, were derived. It was also shown that these could not be improved, since there are convex problems for which the probability of violation precisely equals the bound. However, by restricting the class of problems to those with constraints with a particular structure, it was shown in \cite{Zhang15} that tighter bounds could be obtained. In this paper, we follow the latter while enlarging its scope: our proposed method can handle more general constraints than \cite{Zhang15} and leads to tighter bounds than \cite{Campi08,Calafiore10} in the high-dimensional regime. These hold, for instance, for all conjunctions of polynomial constraints for which the uncertain parameters are not involved in the exponents. 

\subsubsection{Nonconvex problems}
The bounds derived in this paper hold independently of the convexity of the problem. For instance, a convex problem over $d$ real variables can be reformulated over integers without impacting the statistical guarantees. More generally, our framework includes all combinations of conjunctions and disjunctions of polynomial constraints composed with Lipschitz functions without uncertainty in the exponents, over either real or integer variables.  
In addition, the performance guarantees derived in this paper hold uniformly over the entire domain defined by the constraints without uncertainties. This means that they apply without assumptions on our ability to solve the random scenario program. At the opposite, all approaches \cite{Esfahani14,Campi18} that extend the results of \cite{Calafiore10} to the nonconvex case typically apply only to the global optimizer that satisfies all scenario constraints. 

\subsubsection{A posteriori bounds and regularization}
When scenarios cannot be easily generated, a posteriori bounds can be used to assess the quality of a computed solution, as proposed in \cite{Campi18b,Garatti22} for convex problems and \cite{Campi18} in the nonconvex case. However, these work rely on the estimation of the support subsample via a rather costly procedure that involves solving multiple optimization problems, which might be computationally demanding in the nonconvex case. Here, we derive a posteriori bounds that can be computed in a straightforward manner for the same cost as a priori bounds. Inspired by these bounds, we describe a regularization procedure that could be used to search for solutions with increased generalization performance.

\subsection{Notations}
Sequences are written in bold letters with a subscript indicating their length, e.g., $\g t_N=(t_i)_{1\leq i\leq N}$. For two vectors $u$, $v\in\R^n$, the dot product is written as $u^\top v$ and $\|u\|$ denotes the Euclidean norm. However, all results below hold similarly with $\R^{n}$ replaced by a Hilbert space $\H$ equipped with the inner product $\inner{\cdot}{\cdot}_{\H}$ that induces the norm $\|u\|_{\H} =\sqrt{\inner{u}{u}_{\H}}$. The cardinality of a set $\X$ is denoted by $|\X|$ and, given two sets $\X$ and $\Y$, $\Y^{\X}$ is the set of functions of $\X$ to $\Y$. For two functions $f\in\R^{\X}$ and $\varphi\in\R^{\R}$, $\varphi\circ f$ denotes their composition, i.e., $\varphi\circ f(x) = \varphi(f(x))$, and for a class of functions $\F\subset\R^{\X}$, $\varphi\circ\F = \{\varphi\circ f,\ f\in\F\}$. Given two sets of functions, $\U$, $\V\subset\R^{\X}$, we also introduce natural notations for the sets induced by pointwise binary operations: $\U+\V = \{u+v,\ u\in\U,\ v\in\V\}$, $\U-\V = \{u-v,\ u\in\U,\ v\in\V\}$, $\max(\U,\V) = \{g\in\R^{\X} : g(x) = \max\{u(x), v(x)\},\ u\in\U,\ v\in\V\}$, $\min(\U,\V) = \{g\in\R^{\X} : g(x) = \min\{u(x), v(x)\},\ u\in\U,\ v\in\V\}$. The operator $\E_\theta$ denotes the expectation wrt. the random variable $\theta$, and we write merely $\E$ when the random variable is obvious from the context. The indicator function is written as $\I{E}$ and is $1$ when the expression $E$ is true and $0$ otherwise.

\subsection{Paper organization}

We start in Section~\ref{sec:main} by giving the precise formulation of the class of problems we consider. Then, after recalling the standard scenario approach in Sect.~\ref{sec:standard}, we introduce the margin and the proposed approach in Sect.~\ref{sec:margin}. Section~\ref{sec:analysis} contains the theoretical results, including the main generalization bounds in Sect.~\ref{sec:generalization}. The sample complexity is discussed in Sect.~\ref{sec:samplecomplexity} and the maximization of the margin in Sect.~\ref{sec:optimmargin}. A posteriori bounds are detailed in Sect.~\ref{sec:posteriori} and bounds with fast rates of convergence are discussed in Sect.~\ref{sec:fastrates}. Finally, optimization problems are considered in Sect.~\ref{sec:optim} and conclusions are given in Sect.~\ref{sec:conclusions}.

\section{Margin-based scenario approach}
\label{sec:main}

Given the set of admissible parameter values $\Theta$ accounting for the uncertainties, the prototype robust optimization problem that we consider is as follows. 

\begin{problem}[Robust program]\label{pb:worstcase}
Given a parameter set $\Theta$, a domain $\X$, a cost function $J:\X\to\R$ and a constraint function $f:\X\times\Theta\to\R$, solve
\begin{align*}
	&\min_{x\in\X} J(x)\\
	\text{s.t. } &\ f(x, \theta) \leq 0,\quad \forall \theta\in\Theta.
\end{align*}
\end{problem} 
\smallskip
Here, the domain $\X$ entails all the constraints that are not impacted by uncertainties. 
Note that no assumption is made regarding its convexity, and $\X$ can typically be a subset of $\R^d$ for continuous optimization, $\mathbb{Z}^d$ for integer programming or of $\R^{d_1}\times\mathbb{Z}^{d_2}$ for mixed-integer programs.

While the proposed approach described in this paper is generally applicable in principle to any constraint function $f$, we will also provide dedicated results for problems that can be formulated with 
\begin{equation}\label{eq:fmax}
	f(x,\theta) = \max_{k\in\{1,\dots,C\}} f_k(x,\theta)
\end{equation}
for a collection of $C$ functions 
\begin{equation}\label{eq:fk}
	f_k(x,\theta) = \psi_k(\theta)^\top \phi_k(x) + \eta_k(\theta),\quad k=1,\dots,C,
\end{equation}
based on functions $\psi_k:\Theta\to\R^{n_k}$, $\phi_k:\X\to\R^{n_k}$ and $\eta_k:\Theta\to\R$. Note that no assumption will be made regarding the convexity of these functions and of the resulting optimization problems. In fact, the constraint $f(x,\theta)\leq 0$ with the form~\eqref{eq:fmax}, which implements a conjunction of $C$ constraints (all $C$ functions $f_k(x,\theta)$ must be negative), can be replaced by the nonconvex form
\begin{equation}\label{eq:fmin}
	f(x,\theta) = \min_{k\in\{1,\dots,C\}} f_k(x,\theta),
\end{equation}
which implements a disjunction of $C$ alternatives (at least one $f_k(x,\theta)$ must be negative). More generally, we will focus on constraint functions that take the form
\begin{align}
	f(x,\theta) =\ & \rho_C\circ g_C (\ \dots \rho_3\circ  g_3(\ \rho_2\circ g_2( \ \varphi_1\circ f_1(x,\theta)\ ,\nonumber\\
		& \varphi_2\circ f_2(x,\theta)\ ) ,\ \varphi_3\circ f_3(x,\theta)) \dots\ , \varphi_C\circ f_C(x,\theta)) \label{eq:fminmax}
\end{align}
for $C-1$ binary operators
\begin{align*}
	g_k \in \{ & (a,b)\mapsto\max(a,b), (a,b)\mapsto\min(a,b),\\
		&\ (a,b)\mapsto a+b, (a,b)\mapsto a-b\}, \quad k=2,\dots, C,
\end{align*}
and univariate Lipschitz continuous functions $\rho_k:\R\to\R$, $k=2,\dots,C$, and $\varphi_k:\R\to\R$, $k=1,\dots,C$. 
Alternatively, a recursive formulation of~\eqref{eq:fminmax} is $f=f^C$, $f^1=\varphi_1\circ f_1$, and, for $k\geq 2$, $f^{k} = \rho_{k}\circ g_{k}( f^{k-1}, \varphi_k\circ f_k)$. 

This formulation allows for a rather large class of constraint functions. For instance,~\eqref{eq:fmax} is recovered for $g_k=\max$, $k=2,\dots,C$, and the boolean expression 
$$
	(f_1(x,\theta) \leq 0\ \vee\ f_2(x,\theta)\leq 0)\ \wedge\ f_3(x,\theta)\leq 0
$$
is equivalent to $\max\{ \min\{f_1(x,\theta), f_2(x,\theta)\}, f_3(x,\theta)\}\leq 0$. As another example, the function $f(x,\theta)=\sqrt{1.1+\sin(\theta x_1 x_2)} -  \cos(\theta x_2^3)$ can also be written as~\eqref{eq:fminmax}. 
Polynomial constraints could be implemented as well as sums of $C$ monomials $f_k(x,\theta)$. However, it will be more efficient to consider instead $C=1$ and let $f_1(x,\theta)$ be a polynomial with $\phi_1(x)$ gathering all its monomials and $\psi_1(\theta)$ its coefficients.\footnote{While the dimension $n_1$ of $\phi_1(x)$ could quickly grow with the dimension $d$ of $x$ or the degree of the polynomial, we should only keep the monomials with uncertainty in their coefficients as components of $\phi_1(x)$ and then append another single component to compute the sum of all the remaining terms with the corresponding entry in $\psi_1(\theta)$ set to $1$.} 

We note that, while similar in some aspects, our framework is more general than the structure imposed on the constraints by \cite{Zhang15}, which is basically limited to the form~\eqref{eq:fmax}--\eqref{eq:fk} with the maximum operator and constant dimensions $n_1=\dots=n_C$, a {\em convex} set $\X$ and {\em convex} functions $\phi_k$.

\subsection{Standard scenario approach}
\label{sec:standard}

The standard scenario approach aims at approximating the solution to Problem~\ref{pb:worstcase} by solving the following  version with a finite number of sampled constraints. 
\begin{problem}[Scenario program]\label{pb:rand}
Given a probability distribution of $\theta\in\Theta$, draw a random sample of $N$ independent parameter values $(\theta_i)_{1\leq i\leq N} \subset\Theta$ and solve
\begin{align*}
\hat{x} = &\argmin_{x\in\X} J(x)\\
\text{s.t. } & f(x,\theta_i) \leq 0 ,\quad i=1,\dots, N.
\end{align*}
\end{problem}
\smallskip

The ability of Problem~\ref{pb:rand} to provide solutions that satisfy the infinite number of constraints of Problem~\ref{pb:worstcase} is assessed via the risk, i.e., the probability of violation
\begin{equation}\label{eq:risk}
	V(x) = P\{ f(x,\theta) > 0 \},
\end{equation}
where $P$ stands for the probability distribution of $\theta$. 
We also define the empirical risk as the fraction of violated constraints among the scenarios, i.e., 
\begin{equation}\label{eq:empiricalrisk}
	\hat{V}(x) = \frac{1}{N}\sum_{i=1}^N \I{ f(x,\theta_i) > 0 } .
\end{equation}

A classical result \cite{Campi08,Calafiore10} guarantees for instance that for {\em convex} problems and $\hat{x}$ the solution to Problem~\ref{pb:rand}, 
\begin{equation}\label{eq:calafiore1}
	P^N\left\{ V(\hat{x}) \leq \epsilon \right\} \geq 1 - \delta,
\end{equation}
where
\begin{equation}\label{eq:calafiore2}
	\delta = \sum_{j=0}^{d-1} \binom{N}{j}\epsilon^j (1-\epsilon)^{N-j} .
\end{equation}
Thus, with high probability on the random draw of the scenarios $(\theta_i)_{1\leq i\leq N}$, the risk can be guaranteed to be as small as $\epsilon$. Such results can be used to compute the sample complexity, i.e., the minimal number of scenarios $N$ required to guarantee a priori a certain performance for given $\epsilon$ and $\delta$.

\subsection{Introducing the margin}
\label{sec:margin}

We first focus on feasibility problems, i.e., those for which no $J$ is to be minimized, and the guarantees regarding the probability of violation. 

In this paper, we introduce the notion of {\em margin} and the following hard-margin version of Problem~\ref{pb:rand}.
\begin{problem}[Hard-margin scenario program]\label{pb:hardmargin}
Given a probability distribution of $\theta\in\Theta$ and a margin parameter $\gamma>0$, draw a sample of $N$ independent parameter values $(\theta_i)_{1\leq i\leq N} \subset\Theta$ and 
$$
\text{Find } x\in\X,\quad 
\text{s.t. }  f(x,\theta_i) \leq -\gamma ,\quad i=1,\dots, N.
$$
\end{problem}
\smallskip
In Problem~\ref{pb:hardmargin}, the constraints are enforced with an additional margin that will ease the satisfaction of the original constraint $f(x,\theta) \leq 0$ when generalizing to other values of $\theta$, as illustrated by Fig.~\ref{fig:margin}.

When Problem~\ref{pb:hardmargin} (or more generally Problem~\ref{pb:worstcase}) is infeasible, slack variables can be introduced to allow for small violations of the margin constraints, leading to a soft-margin version of the approach.
\begin{problem}[Soft-margin scenario program]\label{pb:softmargin}
Given a probability distribution of $\theta\in\Theta$ and a margin parameter $\gamma>0$, draw a sample of $N$ independent parameter values $(\theta_i)_{1\leq i\leq N} \subset\Theta$ and solve
\begin{align*}
&\min_{x\in\X, \xi\in\R^N}  \sum_{i=1}^N \xi_i\\
\text{s.t. } & f(x,\theta_i) \leq -\gamma + \xi_i ,\quad i=1,\dots, N\\
& \xi_i \geq 0,\quad i=1,\dots,N.
\end{align*}
\end{problem}
\smallskip
As illustrated by Fig.~\ref{fig:softmargin}, the soft margin can be larger than the hard margin, which will translate into better performance guarantees in the analysis below. 

\begin{figure}
\centering
\begin{tikzpicture}
\draw[blue] (0,0) circle (1) ;
\draw[red,very thick] (-30:1) arc (-30:-100:1) ;
\fill circle (1pt) node[right] {$x_0$} ;
\draw (30:1) -- ++(-60:1.5) ; 
\draw (30:1) -- ++(120:1.5) ; 
\draw (120:1) -- ++(30:1.5) ; 
\draw (120:1) -- ++(210:1.5) ; 
\draw (170:1) -- ++(80:1.5) ; 
\draw (170:1) -- ++(260:1.5) ; 
\draw[name path=l1] (-30:1) -- ++(-120:1.5) ; 
\draw (-30:1) -- ++(60:1.5) ; 
\draw (-100:1) -- ++(-190:1.5) ; 
\draw[name path=l2] (-100:1) -- ++(-10:1.5) ; 
\fill[red,name intersections={of=l1 and l2}]
	(intersection-1) circle (2pt) node[below] {$\hat{x}$};
\end{tikzpicture}$\qquad$
\begin{tikzpicture}
\draw[blue] (0,0) circle (1) ;
\fill circle (1pt) node[right] {$x_0$} ;
\draw (30:1) -- ++(-60:1.5) ; 
\draw (30:1) -- ++(120:1.5) ; 
\draw (120:1) -- ++(30:1.5) ; 
\draw (120:1) -- ++(210:1.5) ; 
\draw (170:1) -- ++(80:1.5) ; 
\draw (170:1) -- ++(260:1.5) ; 
\draw (-30:1) -- ++(-120:1.5) ; 
\draw (-30:1) -- ++(60:1.5) ; 
\draw (-100:1) -- ++(-190:1.5) ; 
\draw (-100:1) -- ++(-10:1.5) ; 
\draw[<->] ($ (120:0.7) + (210:1) $) -- ($ (120:1) + (210:1) $) node[above] {$\gamma$}; 
\draw[dashed] (30:0.7) -- ++(-60:1.5) ; 
\draw[dashed] (30:0.7) -- ++(120:1.5) ; 
\draw[dashed] (120:0.7) -- ++(30:1.5) ; 
\draw[dashed] (120:0.7) -- ++(210:1.5) ; 
\draw[dashed] (170:0.7) -- ++(80:1.5) ; 
\draw[dashed] (170:0.7) -- ++(260:1.5) ; 
\draw[dashed,name path=l1] (-30:0.7) -- ++(-120:1.5) ; 
\draw[dashed] (-30:0.7) -- ++(60:1.5) ; 
\draw[dashed] (-100:0.7) -- ++(-190:1.5) ; 
\draw[dashed,name path=l2] (-100:0.7) -- ++(-10:1.5) ; 
\fill[blue,name intersections={of=l1 and l2}]
	(intersection-1) circle (2pt) node[above left] {$\hat{x}$};
\end{tikzpicture}
\caption{Illustration of the benefit of the margin on a toy example: the constraint $\|x-x_0\|\leq 1$ (blue circle) is implemented by an infinite number of linear constraints, $f(x,\theta)=\theta^\top (x-x_0) - 1 \leq 0$, for $\theta$ uniformly distributed in the unit circle. {\em Left:} The standard scenario approach yields a solution \textcolor{red}{$\hat{x}$} that satisfies the 5 scenario constraints (the 5 lines) but has a probability of violation of $V(\hat{x})=20\%$ that corresponds to the probability of drawing a tangent line to the circle at a point along its red part. {\em Right:} All feasible solutions to the hard-margin scenario Problem~\ref{pb:hardmargin}, such as \textcolor{blue}{$\hat{x}$}, satisfy the 5 scenario constraints with a margin $\gamma$ (the 5 dashed lines) and also satisfy the worst-case constraint (blue circle) with $V(\hat{x})=0$. \label{fig:margin}}
\end{figure}
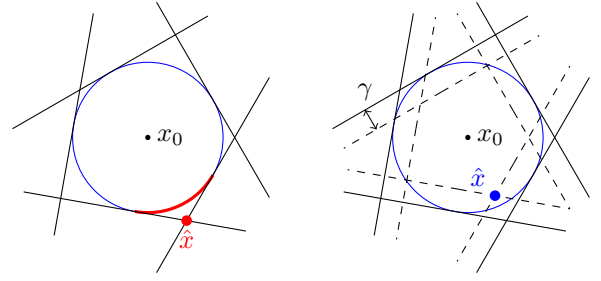
\begin{figure}
\centering
\tikzstyle{tangent}=[
decoration={
markings,
mark=
at position #1
with
	{
	\coordinate (tangent point-\pgfkeysvalueof{/pgf/decoration/mark info/sequence number}) at (0pt,0pt);
	\coordinate (tangent unit vector-\pgfkeysvalueof{/pgf/decoration/mark info/sequence number}) at (1,0pt);
	\coordinate (tangent orthogonal unit vector-\pgfkeysvalueof{/pgf/decoration/mark info/sequence number}) at (0pt,1);
	}
},
postaction=decorate]
\tikzstyle{use tangent}=[shift=(tangent point-#1),
	x=(tangent unit vector-#1),
	y=(tangent orthogonal unit vector-#1)]
\def\M{-0.4} 
\begin{tikzpicture}[xscale=0.5,yscale=0.7]
\draw[blue,domain=-pi:pi,tangent=0.01,tangent=0.05,tangent=0.1,tangent=0.3,tangent=0.45,tangent=0.52,tangent=0.73, tangent=0.96] plot[smooth] ({sin(\x r)},{4*cos(\x r)}) ;
\draw[blue,ultra thick,domain=-pi:pi/4] plot[smooth] ({sin(\x r)},{4*cos(\x r)}) ;
\draw[use tangent=1] (-1,0) -- (2,0);
\draw[use tangent=1,dashed] (-1,\M) -- (2,\M);
\draw[use tangent=1,<->,red] (-1,0) -- (-1,\M) node[above] {$\gamma$}; 
\draw[use tangent=3] (-2,0) -- (2,0);
\draw[use tangent=3,dashed] (-2,\M) -- (2,\M);
\draw[use tangent=4] (-2,0) -- (2,0);
\draw[use tangent=4,dashed] (-2,\M) -- (2,\M);
\draw[use tangent=5] (-2,0) -- (1,0);
\draw[use tangent=5,dashed] (-2,\M) -- (1,\M);
\draw[use tangent=6] (-1,0) -- (2,0);
\draw[use tangent=6,dashed] (-1,\M) -- (2,\M);
\draw[use tangent=7] (-2,0) -- (2,0);
\draw[use tangent=7,dashed] (-2,\M) -- (2,\M);
\draw[use tangent=8] (-2,0) -- (1,0);
\draw[use tangent=8,dashed] (-2,\M) -- (1,\M);
\fill[blue] (0,0) circle (3pt) node[right] {$\hat{x}$};
\end{tikzpicture}
\def\M2{-0.7}
\begin{tikzpicture}[xscale=0.5,yscale=0.7]
\draw[blue,domain=-pi:pi,tangent=0.01,tangent=0.05,tangent=0.1,tangent=0.3,tangent=0.45,tangent=0.52,tangent=0.73, tangent=0.96] plot[smooth] ({sin(\x r)},{4*cos(\x r)}) ;
\draw[blue,ultra thick,domain=-pi:pi/4] plot[smooth] ({sin(\x r)},{4*cos(\x r)}) ;
\draw[use tangent=1] (-1,0) -- (2,0);
\draw[use tangent=1,dashed,opacity=.2] (-1,\M2) -- (2,\M2);
\draw[use tangent=1,<->,blue] (1.5,0) -- (1.5,\M2) node[left] {$\gamma$}; 
\draw[use tangent=3] (-2,0) -- (2,0);
\draw[use tangent=3,dashed,opacity=.2] (-2,\M2) -- (2,\M2);
\draw[use tangent=4] (-2,0) -- (2,0);
\draw[use tangent=4,dashed] (-2,\M2) -- (2,\M2);
\draw[use tangent=5] (-2,0) -- (1,0);
\draw[use tangent=5,dashed,opacity=.2] (-2,\M2) -- (1,\M2);
\draw[use tangent=6] (-1,0) -- (2,0);
\draw[use tangent=6,dashed] (-1,\M2) -- (3,\M2);
\draw[use tangent=7,red,thick] (-3,0) -- (3,0);
\draw[use tangent=7,dashed,red,thick] (-3,\M2) -- (3,\M2);
\draw[use tangent=7,<->,red] (-0.15,-0.1) -- (-0.15,\M2) node[above right] {$\xi_i$}; 
\draw[use tangent=8] (-2,0) -- (1,0);
\draw[use tangent=8,dashed] (-3,\M2) -- (1,\M2);
\fill[blue] (0.83,0.65) circle (3pt) node[right] {$\hat{x}$};
\end{tikzpicture}
\caption{Illustration of the soft-margin approach on a toy example where random linear constraints are substituted for a nonlinear constraint (blue ellipsoid). {\em Left:} The hard-margin scenario program yields a solution $\hat{x}$ that is feasible for all $\theta\in\Theta$ with $V(\hat{x})=0$. However, the margin \textcolor{red}{$\gamma$} is rather small and cannot be increased without making Problem~\ref{pb:hardmargin} infeasible. Thus, the upper bound on $V(\hat{x})$ given by Theorem~\ref{thm:rad} is not really tight and, though the solution is perfect, we have a limited confidence in its performance. {\em Right: } By allowing the violation of one margin constraint (the red dashed line) with a slack of \textcolor{red}{$\xi_i$}, the soft-margin version yields a solution with a larger margin \textcolor{blue}{$\gamma$}, and thus with more confidence in its performance despite the increase in the empirical error $\hat{V}_{\gamma}(\hat{x})$. \label{fig:softmargin}}
\end{figure}

\section{Analysis} 
\label{sec:analysis}

In order to refine the analysis of the quality of the solution of Problems~\ref{pb:hardmargin}--\ref{pb:softmargin}, we introduce margin counterparts to the risks. These are based on a margin loss function $\ell_{\gamma} : \X\times \Theta\to[0,1]$ which measures the ability of $x$ to satisfy the margin constraint $f(x,\theta)\leq -\gamma$ for a single value of $\theta$. 
More precisely, we consider the piecewise linear margin loss function given by
\begin{equation}\label{eq:piecewiseloss}
	\ell_{\gamma}(x,\theta) = \begin{cases}
		1, & \mbox{if } f(x,\theta) \geq 0\\
		1 + \frac{f(x,\theta)}{\gamma}, &\mbox{if } f(x,\theta)\in (-\gamma, 0)\\
		0, & \mbox{if } f(x,\theta)\leq -\gamma 
		\end{cases}
\end{equation}
and define the risk at margin $\gamma$ as
$$
	V_{\gamma}(x) = \E _{\theta}\ell_{\gamma}(x,\theta), 
$$
and the empirical margin risk as 
$$
	\hat{V}_{\gamma}(x) = \frac{1}{N}\sum_{i=1}^N\ell_{\gamma}(x,\theta_i).
$$
In comparison with the standard definition of the risk, the margin risk $V_{\gamma}(x)$ based on the loss~\eqref{eq:piecewiseloss} also accounts for scenarios on which the constraint is satisfied with not enough margin. It is easy to verify that
\begin{equation}\label{eq:lossupperbound}
	\forall x\in\X,\theta\in\Theta,\quad \ell_{\gamma}(x,\theta) \geq \I{f(x,\theta) > 0}
\end{equation}
and therefore that $V_{\gamma}(x) \geq \E_{\theta}\I{f(x,\theta)>0} = V(x)$ always holds. Thus, by upper bounding the margin risk $V_{\gamma}(x)$, we will also bound the true probability of violation $V(x)$, and this can be done at a finer scale since $V_{\gamma}(x)$ depends more precisely on the values of $f(x,\theta)$ and not just on its sign as $V(x)$.

\subsection{Bounds on the probability of violation}
\label{sec:generalization}

Inspired by the statistical learning approach initiated by \cite{Koltchinskii02,Bartlett02}, the behavior of the margin risk is here analyzed in terms of the Rademacher complexity. 
\begin{definition}[Rademacher complexity] 
\label{def:radcomp}
For $N \in \mathbb{N}^*$,
let $\g{\theta}_N = \left( \theta_i  \right)_{1 \leq i \leq N}$
be an $N$-sample of independent copies of the random variable $\theta\in\Theta$, let
$\boldsymbol{\sigma}_N = \left ( \sigma_i \right )_{1 \leq i \leq N}$
be a sequence of independent random variables uniformly distributed in $\{-1,+1\}$. 
Let $\mathcal{F}$ be a class of real-valued functions on $\Theta$.
The {\em empirical Rademacher complexity} of $\mathcal{F}$ given $\g \theta_N$ is
$$
\hat{\mathcal{R}}_N \left ( \mathcal{F} \right )  = \E_{\g{\sigma}_N}\left[\left. \sup_{f \in \F} \frac{1}{N}\sum_{i=1}^N \sigma_i f \left ( \theta_i \right )\ \right| \g \theta_N \right].
$$
The {\em Rademacher complexity} of $\F$ is $\mathcal{R}_N(\F) = \E_{\g\theta_N} \hat{\mathcal{R}}_N \left ( \F \right )$.
\end{definition}

In particular, we have the following generic theorem, that applies similarly to the solution of Problem~\ref{pb:rand},~\ref{pb:hardmargin} or~\ref{pb:softmargin}.
\begin{theorem}\label{thm:rad}
For any $\gamma>0$ and $\delta\in(0,1)$, with probability at least $1-\delta$ on the random draw of $(\theta_i)_{1\leq i\leq N}$, the probability of violation is uniformly bounded for all $x\in\X$ by
$$
	V(x) \leq V_{\gamma}(x)\leq \hat{V}_{\gamma}(x) + \frac{2}{\gamma}\mathcal{R}_N(\mathcal{F}) + \sqrt{\frac{\log \frac{1}{\delta}}{2N}},
$$
where
\begin{equation}\label{eq:F}
	\F = \{f_x \in \R^{\Theta}: f_x(\theta) = f(x,\theta)  ,\ x\in\X \}.
\end{equation}
\end{theorem}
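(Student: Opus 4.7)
The plan is to reduce Theorem~\ref{thm:rad} to a standard Rademacher-based uniform deviation bound applied to the loss class, and then to peel off the loss via Talagrand's contraction principle. Concretely, note that the margin loss~\eqref{eq:piecewiseloss} may be written as $\ell_{\gamma}(x,\theta)=\varphi_{\gamma}(f(x,\theta))$, where
\begin{equation*}
\varphi_{\gamma}(u)=\min\Bigl(1,\max\bigl(0,1+u/\gamma\bigr)\Bigr)
\end{equation*}
takes values in $[0,1]$ and is $(1/\gamma)$-Lipschitz on $\R$. This lets us identify the loss class with $\varphi_{\gamma}\circ\F$, where $\F$ is defined in~\eqref{eq:F}, and view everything as function classes on $\Theta$.

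The first step is the reduction $V(x)\leq V_{\gamma}(x)$, which is immediate from~\eqref{eq:lossupperbound}. The second step is the classical uniform bound for bounded losses: since every $\varphi_{\gamma}\circ f_x$ is $[0,1]$-valued, McDiarmid's inequality applied to the supremum $\sup_{x\in\X}\bigl(V_{\gamma}(x)-\hat V_{\gamma}(x)\bigr)$, followed by the standard symmetrization argument, yields that with probability at least $1-\delta$ on the draw of $\g\theta_N$,
\begin{equation*}
\forall x\in\X,\quad V_{\gamma}(x)\leq \hat V_{\gamma}(x) + 2\,\mathcal{R}_N(\varphi_{\gamma}\circ\F)+\sqrt{\frac{\log(1/\delta)}{2N}}.
\end{equation*}
Chaining this with the first step already gives the two leftmost inequalities of the theorem up to a Rademacher term that still involves the loss.

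The third step removes the loss from the Rademacher term using Ledoux--Talagrand contraction. Since $\varphi_{\gamma}$ is $(1/\gamma)$-Lipschitz, writing $\tilde\varphi_{\gamma}=\varphi_{\gamma}-\varphi_{\gamma}(0)$ yields a $(1/\gamma)$-Lipschitz function vanishing at $0$; the added constant factors out of the supremum in Definition~\ref{def:radcomp} and vanishes in expectation over $\g\sigma_N$, so that $\mathcal{R}_N(\varphi_{\gamma}\circ\F)=\mathcal{R}_N(\tilde\varphi_{\gamma}\circ\F)$. The contraction principle then gives
\begin{equation*}
\mathcal{R}_N(\varphi_{\gamma}\circ\F)\leq \frac{1}{\gamma}\,\mathcal{R}_N(\F),
\end{equation*}
and substituting into the previous display delivers the claimed bound.

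I do not expect any real obstacle: each ingredient (McDiarmid, symmetrization, contraction) is a textbook tool, and the only mildly technical point is justifying that the contraction lemma applies despite $\varphi_{\gamma}(0)=1\neq 0$, which is handled by the constant-shift argument above. The rest of the proof is bookkeeping to make sure the bound holds uniformly over $x\in\X$ rather than only at the computed solution, which is automatic from the supremum in the symmetrization step.
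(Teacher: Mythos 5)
Your proof is correct and follows essentially the same route as the paper's: the pointwise bound $\I{f(x,\theta)>0}\leq\ell_{\gamma}(x,\theta)$ for the first inequality, the standard Rademacher uniform deviation bound for $[0,1]$-valued losses (the paper invokes it as Theorem~\ref{thm:general} rather than rederiving McDiarmid plus symmetrization), and the contraction principle with the $(1/\gamma)$-Lipschitz clipping function. Your constant-shift argument to handle $\varphi_{\gamma}(0)\neq 0$ is valid but unnecessary here, since the version of the contraction lemma stated in the paper (Lemma~\ref{lem:contraction}) does not require the Lipschitz function to vanish at the origin.
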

\begin{proof}
As already pointed out, the first inequality is a direct consequence of~\eqref{eq:lossupperbound}. 
The second inequality stems from Theorem~\ref{thm:general} in Appendix~\ref{sec:tools} applied to the class of loss functions
\begin{equation}\label{eq:lossclassrad}
	\L_{\gamma} = \{\ell_{\gamma,x} \in [0,1]^{\Theta}: \ell_{\gamma,x}(\theta) = \ell_{\gamma}(x,\theta),\ x\in\X \}, 
\end{equation}
and which guarantees that, with probability at least $1-\delta$, 
\begin{equation}\label{eq:boundproofg1}
	V_{\gamma}(x) \leq \hat{V}_{\gamma}(x) + 2\mathcal{R}_N(\mathcal{L}_{\gamma}) + \sqrt{\frac{\log \frac{1}{\delta}}{2N}}.
\end{equation}
Then, the Rademacher complexity of the loss class~\eqref{eq:lossclassrad} is bounded in terms of the class $\F$ using the contraction principle (Lemma~\ref{lem:contraction} in App.~\ref{sec:tools}) together with the fact that $\ell_{\gamma}$ can be written as
$$
	\ell_{\gamma}(x,\theta) = \varphi \circ f(x,\theta)
$$
with the function $\varphi(t) = \min\{1,\max\{0,1 + t/\gamma\}\}$ of Lipschitz constant equal to $1/\gamma$. This leads to
$$
	\mathcal{R}_N(\mathcal{L}_{\gamma}) \leq \frac{1}{\gamma} \mathcal{R}_N(\mathcal{F})
$$
and concludes the proof.
\end{proof}

Theorem~\ref{thm:rad} allows one to bound the risk in terms of the empirical risk with a confidence interval that depends on the complexity of the constraint functions induced by the domain $\X$. This is very much related to the approach of \cite{Alamo09}, which used the Vapnik-Chervonenkis (VC) dimension of 
$$
	\L=\{\ell_{x} \in [0,1]^{\Theta}: \ell_{x}(\theta) = \I{f(x,\theta) > 0},\ x\in\X \}
$$
to measure this complexity. Here, the Rademacher complexity offers a finer measure of capacity that also takes into account the magnitude of $f(x,\theta)$, whereas the VC-dimension of $\L$ only depends on its sign. However, the two can be related (see, e.g., \cite{Mohri18}): for a class $\L$ of finite VC-dimension $d_{VC}$, results in the flavor of those of \cite{Alamo09} can be recovered in our general approach by substituting $\L$ for the loss class $\L_{\gamma}$ in the proof above and bounding its Rademacher complexity by 
\begin{equation}\label{eq:vcdim}
	\mathcal{R}_N(\mathcal{L})\leq \sqrt{\frac{2 d_{VC}\log \frac{\mathrm{e} N}{d_{VC}}}{N}}.
\end{equation}

An important feature of Theorem~\ref{thm:rad} is that it holds uniformly over $\X$, meaning that it applies whether one truly solves Problems~\ref{pb:hardmargin}--\ref{pb:softmargin} or not, as long as the computed solution $\hat{x}$ lies in the domain $\X$. This is all the more relevant when considering nonconvex instances for which true solutions are difficult to obtain, and stands in contrast to the standard results discussed in the literature \cite{Calafiore10,Esfahani14,Campi18} that only apply to the optimizer of the scenario problem.

The influence of the margin $\gamma$ on the performance guarantees are clear from Theorem~\ref{thm:rad}: the confidence interval of the upper bound on the probability of violation decreases linearly with $1/\gamma$. Thus, a larger margin ensures better confidence, as illustrated by Fig.~\ref{fig:softmargin}. In the right plot of this Figure, the empirical error $\hat{V}_{\gamma}(x)$ increases by $\xi_i/\gamma N$ as one of the margin constraints is violated.\footnote{Note that  Problem~\ref{pb:softmargin} incurs a linear cost for errors $\xi_i$, whereas the loss function~\eqref{eq:piecewiseloss} saturates at $1$. Thus, in general, each error incurs a loss $\min(\xi_i/\gamma, 1)/N$ in $\hat{V}_{\gamma}(x)$, while in Fig.~\ref{fig:softmargin}, $\xi_i<\gamma$.} Meanwhile, the increase of the margin decreases the confidence interval and, overall, leads to a more favorable risk bound than in the left plot. Here, the thick part of the ellipsoid represents the points where tangent lines are drawn with high probability, while the thin part corresponds to random constraints that occur with low probability. Thus, the red constraint is a rare event and violating it does not incur a large increase of the probability of violation, which explains why a solution with more errors can lead to better guarantees in generalization. The low probability also means that there are fewer such constraints in the scenarios and that the cost of Problem~\ref{pb:softmargin} is minimized when violating the red constraint rather than multiple other constraints.  

In order to produce a performance guarantee, the Rademacher complexity appearing in Theorem~\ref{thm:rad} must be estimated.  The following theorem shows how to upper bound this complexity for the specific case of constraints functions as in~\eqref{eq:fminmax}. 
\begin{theorem}\label{thm:radboolean}
Let $\F$ be a function class as in~\eqref{eq:F} with $f$ of the form~\eqref{eq:fminmax}. Then,
$$
	\mathcal{R}_N(\mathcal{F}) \leq \sum_{k=1}^C \frac{(\prod_{j=k}^C \overline{\rho}_j)\overline{\varphi}_k\tau_k \Lambda_k}{\sqrt{N}},
$$
where $\tau_k = \sup_{\theta\in\Theta} \|\psi_k(\theta)\|$, $\Lambda_k =\sup_{x\in\X} \|\phi_k(x)\|$, $\overline{\rho}_1=1$, $\overline{\rho}_k$ for $k\geq 2$ and $\overline{\varphi}_k$ for $k\geq 1$ are the Lipschitz constants of the functions $\rho_k$ and $\varphi_k$, respectively.
\end{theorem}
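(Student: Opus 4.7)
The plan is to unroll the recursive definition $f^k=\rho_k\circ g_k(f^{k-1},\varphi_k\circ f_k)$ from $k=1$ up to $k=C$ and to control the Rademacher complexity at each layer by combining three ingredients: the contraction principle (Lemma~\ref{lem:contraction}) for the Lipschitz maps $\rho_k$ and $\varphi_k$, subadditivity of $\mathcal{R}_N$ under the four binary operators $g_k\in\{+,-,\max,\min\}$, and a direct bound on the base class associated to each $f_k$.

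\textbf{Base layer.} First I would bound $\mathcal{R}_N(\F_k)$ for $\F_k=\{\theta\mapsto \psi_k(\theta)^\top\phi_k(x)+\eta_k(\theta):x\in\X\}$. Since $\eta_k(\theta)$ does not depend on $x$, it can be pulled out of the supremum and contributes $\E_{\g\sigma_N}[\tfrac{1}{N}\sum_i\sigma_i\eta_k(\theta_i)]=0$. On the $x$-dependent part, Cauchy--Schwarz inside the supremum followed by Jensen's inequality and independence of the Rademacher variables yield
$$
\hat{\mathcal{R}}_N(\F_k)\leq \frac{\Lambda_k}{N}\E_{\g\sigma_N}\Bigl\|\sum_i\sigma_i\psi_k(\theta_i)\Bigr\|\leq \frac{\Lambda_k}{N}\sqrt{\sum_i\|\psi_k(\theta_i)\|^2}\leq\frac{\tau_k\Lambda_k}{\sqrt{N}}.
$$
The contraction principle applied to the $\overline{\varphi}_k$-Lipschitz $\varphi_k$ then gives $\mathcal{R}_N(\varphi_k\circ\F_k)\leq \overline{\varphi}_k\tau_k\Lambda_k/\sqrt{N}$.

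\textbf{Subadditivity under $g_k$.} Next I would show that, for any two classes $\U,\V\subset\R^{\Theta}$ and any $g\in\{+,-,\max,\min\}$, $\mathcal{R}_N(g(\U,\V))\leq \mathcal{R}_N(\U)+\mathcal{R}_N(\V)$. For $+$ and $-$ this is immediate by splitting the supremum and using the symmetry of the $\sigma_i$ to absorb the minus sign. For $\max$ and $\min$, I would use the identities $\max(a,b)=\tfrac{1}{2}(a+b)+\tfrac{1}{2}|a-b|$ and $\min(a,b)=\tfrac{1}{2}(a+b)-\tfrac{1}{2}|a-b|$; the affine part gives $\tfrac{1}{2}(\mathcal{R}_N(\U)+\mathcal{R}_N(\V))$, while the contraction principle applied to the $1$-Lipschitz absolute value acting on the class $\U-\V$ yields another $\tfrac{1}{2}(\mathcal{R}_N(\U)+\mathcal{R}_N(\V))$.

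\textbf{Induction.} Writing $\F^k$ for the class associated to $f^k$, the contraction principle for $\rho_k$ together with the subadditivity above give, for every $k\geq 2$,
$$
\mathcal{R}_N(\F^k)\leq \overline{\rho}_k\bigl(\mathcal{R}_N(\F^{k-1})+\overline{\varphi}_k\tau_k\Lambda_k/\sqrt{N}\bigr),
$$
initialized by $\mathcal{R}_N(\F^1)\leq \overline{\varphi}_1\tau_1\Lambda_1/\sqrt{N}$. Unrolling this recursion and collecting the products of Lipschitz constants produces exactly the announced sum, with the convention $\overline{\rho}_1=1$ handling the $k=1$ term uniformly.

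\textbf{Expected difficulty.} The only step that is not entirely routine is the $\max$/$\min$ bound: one must be careful to apply the contraction principle to the composite class $|\cdot|\circ(\U-\V)$ rather than to $\U$ or $\V$ separately, and to use symmetry of the $\sigma_i$ to identify $\mathcal{R}_N(-\V)$ with $\mathcal{R}_N(\V)$. The remaining steps are direct applications of standard tools, and the overall structure mirrors the recursive construction of $f$ itself.
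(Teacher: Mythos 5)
Your proposal is correct and follows essentially the same route as the paper's proof: the same Cauchy--Schwarz/Jensen bound on the base pseudo-linear classes (the paper's Lemma~\ref{lem:radpseudolinear}), the same use of the contraction principle for $\varphi_k$ and $\rho_k$, the same subadditivity argument for the four binary operators with $\max$/$\min$ handled via the identity $\tfrac{1}{2}(a+b\pm|a-b|)$ and contraction on $|\cdot|\circ(\U-\V)$, and the same induction over $C$. No gaps.
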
 
\begin{proof}
See Appendix~\ref{sec:theorem2}.
\end{proof}

Theorem~\ref{thm:radboolean} shows that the complexity, and thus the generalization performance via Theorem~\ref{thm:rad}, does not depend on the dimension of $x$ but rather on the size of the $\phi_k(x)$'s over $\X$ (as measured by $\Lambda_k$).\footnote{Also note that, due to the definition of $\Lambda_k$, $\X$ could be replaced by any (possibly discrete) subset $\X'\subset\X$ without any negative impact on the statistical guarantees.} This stands in contrast to most of the literature on scenario optimization, such as \cite{Calafiore10,Alamo09,Esfahani14}, where the dimension plays a major role. This can be seen in~\eqref{eq:calafiore2} for convex problems as studied in \cite{Calafiore10}, while for the approach of \cite{Alamo09}, the VC-dimension $d_{VC}$ in~\eqref{eq:vcdim} grows linearly with $d$ even for the most simple constraint function~\eqref{eq:fminmax} with $C=1$, $\varphi_1$ the identity and $f_1$ a linear function of $x$. Finally, when considering integer programming problems, the method proposed in \cite{Esfahani14} also leads to a bound linear in the number of integer variables.

However, there is nothing special about vectors $\phi_k(x)$ with small norms, i.e., closely gathered around the origin. Indeed, what really matters is the radius of the smallest ball enclosing the $\phi_k(x)$'s induced by all $x\in\X$, as made precise in the following corollary. Therefore, if the search domain $\X$ can be a priori reduced in size (for instance after the computation of a preliminary good guess of the solution), the radius could typically be made smaller, leading to better guarantees on the probability of violation.  
\begin{corollary}\label{cor:centered}
Let $\F$ be a function class as in~\eqref{eq:F} with $f$ of the form~\eqref{eq:fminmax}. Then, for any choice of centers $(\phi_{0,k})_{1\leq k \leq C} \in \prod_{k=1}^C \R^{n_k}$, 
$$
	\mathcal{R}_N(\mathcal{F}) \leq \sum_{k=1}^C \frac{(\prod_{j=k}^C \overline{\rho}_j)\overline{\varphi}_k\tau_k \tilde{\Lambda}_k}{\sqrt{N}},
$$
where $\tilde{\Lambda}_k = \sup_{x\in\X} \|\phi_k(x) - \phi_{0,k}\|$ and the other constants are as in Theorem~\ref{thm:radboolean}. 
\end{corollary}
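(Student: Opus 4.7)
The plan is to apply Theorem~\ref{thm:radboolean} to a reparametrization of the constraint function in which each $\phi_k$ is recentered at $\phi_{0,k}$, exploiting the fact that the function class $\F$ itself is left unchanged by such a shift.

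The first step is to observe that for any choice of centers $(\phi_{0,k})_{1 \leq k \leq C}$, each base function $f_k$ of the form~\eqref{eq:fk} can be rewritten as
$$
f_k(x,\theta) = \psi_k(\theta)^\top \tilde{\phi}_k(x) + \tilde{\eta}_k(\theta),
$$
where $\tilde{\phi}_k(x) = \phi_k(x) - \phi_{0,k}$ and $\tilde{\eta}_k(\theta) = \eta_k(\theta) + \psi_k(\theta)^\top \phi_{0,k}$. This is again a valid instance of~\eqref{eq:fk}, with the same $\psi_k$ and with $\tilde{\phi}_k:\X\to\R^{n_k}$, $\tilde{\eta}_k:\Theta\to\R$. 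Plugging these recentered $f_k$'s into~\eqref{eq:fminmax} produces exactly the same composite function $f$, hence the same class $\F$ defined by~\eqref{eq:F}, and therefore the same Rademacher complexity $\mathcal{R}_N(\F)$.

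The second step is to apply Theorem~\ref{thm:radboolean} to this recentered representation of $f$. The operators $g_k$ and the univariate Lipschitz maps $\rho_k$, $\varphi_k$ are untouched, so their Lipschitz constants $\overline{\rho}_j$ and $\overline{\varphi}_k$ are unchanged; likewise $\tau_k = \sup_{\theta\in\Theta}\|\psi_k(\theta)\|$ is unchanged. The only quantity affected is $\Lambda_k$, which is now replaced by
$$
\sup_{x\in\X}\|\tilde{\phi}_k(x)\| = \sup_{x\in\X}\|\phi_k(x) - \phi_{0,k}\| = \tilde{\Lambda}_k.
$$
Theorem~\ref{thm:radboolean} then directly yields the announced bound.

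There is no real obstacle here: the corollary is a translation-invariance argument. The only point worth checking is that the offset term introduced by the recentering, $\psi_k(\theta)^\top\phi_{0,k}$, is absorbed into $\tilde{\eta}_k(\theta)$ (a function of $\theta$ only), so that the reformulation still matches the template~\eqref{eq:fk}; and that $\tilde{\eta}_k$ never enters the bound of Theorem~\ref{thm:radboolean}, so its possibly large magnitude is irrelevant.
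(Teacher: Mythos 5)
Your proof is correct and follows essentially the same route as the paper: recenter each $f_k$ as $\psi_k(\theta)^\top\tilde{\phi}_k(x)+\tilde{\eta}_k(\theta)$, note that the class $\F$ (hence $\mathcal{R}_N(\F)$) is unchanged, and invoke Theorem~\ref{thm:radboolean} with $\Lambda_k$ replaced by $\tilde{\Lambda}_k$ since $\tilde{\eta}_k$ never enters the bound. Nothing is missing.
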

\begin{proof}
Given a choice of $(\phi_{0,k})_{1\leq k \leq C}$, reformulate the $f_k$'s as
\begin{align*}
	f_k(x,\theta) &= \psi_k(\theta)^\top (\phi_k(x) - \phi_{0,k}) + (\eta_k(\theta) + \psi_k(\theta)^\top\phi_{0,k})\\
					&= \psi_k(\theta)^\top \tilde{\phi}_k(x) + \tilde{\eta}_k(\theta) 
\end{align*}
and note that in the proofs of Theorem~\ref{thm:radboolean} and Lemma~\ref{lem:radpseudolinear}, the definition of $\eta_k(\theta)$ does not play any role (as long as it does not depend on $x$). Therefore, they hold similarly with $\tilde{\phi}_k(x)$, $\tilde{\eta}_k(\theta)$ instead of $\phi_k(x)$, $\eta_k(\theta)$, and $\sup_{x\in\X} \|\phi_k(x)\|$ replaced by 
$$
	\sup_{x\in\X} \|\tilde{\phi}_{k}(x) \| = \sup_{x\in\X} \|\phi_k(x) - \phi_{0,k}\| = \tilde{\Lambda}_k.
$$ 
\end{proof}
Note that Corollary~\ref{cor:centered} provides an improvement over Theorem~\ref{thm:radboolean}, since it allows the choice $\phi_{0,k}=0$. 

The combination of Theorems~\ref{thm:rad} and \ref{thm:radboolean} (or Corollary~\ref{cor:centered}) leads to a bound on the probability of violation that converges to zero as $O(1/\sqrt{N})$, which, at first glance, might appear weaker than most results in the literature on scenario optimization, such as~\eqref{eq:calafiore1}--\eqref{eq:calafiore2}, which typically lead to bounds with a faster convergence rate in $O(d/N)$. However, a bound in $O(d/N)$ that improves upon Theorems~\ref{thm:rad}--\ref{thm:radboolean} requires that $d=O(\sqrt{N})$, which fails in high-dimensional regimes. This is all the more relevant that scenario optimization is mostly concerned with the non-asymptotic case which can offer practical solutions to difficult robust optimization problems with a finite and reasonable $N$, thus limiting the benefit of bounds in $O(d/N)$ to rather small dimensions in practice. This informal argument in favor of the proposed approach for high-dimensional problems, and even convex ones, will be made precise below when discussing the sample complexity. 

Regarding the linear dependence of the bounds with respect to $C$, it is reminiscent of the fact that the constraint function~\eqref{eq:fminmax} is built from a combination of several constraints/components. A similar effect can be observed in the results of \cite{Zhang15} for convex problems based on~\eqref{eq:fmax}.\footnote{The term $\prod_{j=k}^C \overline{\rho}_j$ is not considered here as it typically amounts to a product of values close to one, and is even exactly $1$ for all problems analyzed in~\cite{Zhang15}.}   

\subsection{Sample complexity}
\label{sec:samplecomplexity}

The sample complexity of a scenario problem is the minimal number of scenarios that are required to guarantee that, with high probability, the probability of violation does not exceed a predefined threshold. The following sample complexity estimate can be derived from the results in Theorems~\ref{thm:rad} and~\ref{thm:radboolean}.
\begin{corollary}
Let $f$ be of the form~\eqref{eq:fminmax}. Then, for any $N$ larger than or equal to the sample complexity
\begin{equation}\label{eq:refsamplecomplexityrad}
	N(\epsilon,\delta)= \frac{\left(\frac{2}{\gamma}\sum_{k=1}^C \tau_k\Lambda_k + \sqrt{\log\frac{1}{\delta^{1/2}}}\right)^2}{\epsilon^2},
\end{equation}
the probability of violation $V(x)$ of any $x\in\X$ is less than or equal to $\hat{V}_{\gamma}(x) + \epsilon$ with probability at least $1-\delta$.

In addition, if Problem~\ref{pb:hardmargin} is feasible, then all its solutions $\hat{x}$ satisfy $\hat{V}_{\gamma}(\hat{x}) =0$ and the bound $V(\hat{x})\leq \epsilon$ holds with probability at least $1-\delta$ for any $N\geq N(\epsilon,\delta)$. 
\end{corollary}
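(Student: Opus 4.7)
The plan is to combine the generalization bound of Theorem~\ref{thm:rad} with the Rademacher complexity estimate of Theorem~\ref{thm:radboolean}, and then invert the resulting inequality in $N$.

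First, I would apply Theorem~\ref{thm:rad}, which guarantees that, with probability at least $1-\delta$ over the draw of $(\theta_i)_{1\leq i\leq N}$, every $x\in\X$ satisfies
$$V(x) \leq \hat{V}_{\gamma}(x) + \frac{2}{\gamma}\mathcal{R}_N(\F) + \sqrt{\frac{\log(1/\delta)}{2N}}.$$
Since $f$ is of the form~\eqref{eq:fminmax}, Theorem~\ref{thm:radboolean} would plug in a bound on $\mathcal{R}_N(\F)$ of the form $\frac{1}{\sqrt{N}}\sum_{k=1}^C \tau_k \Lambda_k$, with the Lipschitz constants $\overline{\rho}_j,\overline{\varphi}_k$ implicitly taken to be $1$ (or otherwise absorbed into the $\tau_k\Lambda_k$ factors). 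Using the identity $\tfrac{1}{2}\log(1/\delta) = \log(1/\delta^{1/2})$, this would give, uniformly over $x\in\X$,
$$V(x) \leq \hat{V}_{\gamma}(x) + \frac{1}{\sqrt{N}}\left(\frac{2}{\gamma}\sum_{k=1}^C \tau_k \Lambda_k + \sqrt{\log(1/\delta^{1/2})}\right).$$

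The first claim would then follow by requiring the additive slack to be at most $\epsilon$ and solving the elementary inequality $A/\sqrt{N} \leq \epsilon$ for $N$, which rearranges to exactly~\eqref{eq:refsamplecomplexityrad}. Since the right-hand side is monotonically decreasing in $N$, the bound $V(x) \leq \hat{V}_{\gamma}(x) + \epsilon$ then persists for every $N \geq N(\epsilon,\delta)$.

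For the second claim, I would observe that if Problem~\ref{pb:hardmargin} is feasible, every solution $\hat{x}$ satisfies $f(\hat{x},\theta_i) \leq -\gamma$ for all $i$, so by the definition~\eqref{eq:piecewiseloss} of $\ell_{\gamma}$ each summand in $\hat{V}_{\gamma}(\hat{x})$ vanishes, i.e.\ $\hat{V}_{\gamma}(\hat{x}) = 0$. Specializing the first claim at $x=\hat{x}$ then yields $V(\hat{x}) \leq \epsilon$ with probability at least $1-\delta$. There is no genuine obstacle in this proof: once Theorems~\ref{thm:rad}--\ref{thm:radboolean} are available, everything reduces to an algebraic manipulation. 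The only point warranting care is the bookkeeping of the Lipschitz prefactors from Theorem~\ref{thm:radboolean}, which the corollary's simplified expression treats as unity, and the transfer from a uniform-over-$\X$ guarantee to the pointwise guarantee at $\hat{x}$, which is immediate from uniformity.
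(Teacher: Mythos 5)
Your proof is correct and follows exactly the route the paper intends: combine Theorem~\ref{thm:rad} with Theorem~\ref{thm:radboolean}, rewrite $\sqrt{\log(1/\delta)/(2N)}$ as $\sqrt{\log(1/\delta^{1/2})}/\sqrt{N}$, and solve $A/\sqrt{N}\leq\epsilon$ for $N$; the second claim is indeed immediate from feasibility forcing $\ell_{\gamma}(\hat{x},\theta_i)=0$ and from the uniformity of the bound over $\X$. You are also right to flag that the stated $N(\epsilon,\delta)$ silently drops the Lipschitz prefactors $(\prod_{j=k}^C\overline{\rho}_j)\overline{\varphi}_k$ from Theorem~\ref{thm:radboolean}, which the paper treats as equal to one here (they reappear in the margin-complexity formula of Section~\ref{sec:optim}).
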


Though the sample complexity thus obtained is quadratic wrt. $1/\epsilon$, its numerical value can be less than that obtained with the standard approach for convex problems in high dimension. Indeed, as shown in \cite{Calafiore10}, for convex problems,~\eqref{eq:calafiore1}--\eqref{eq:calafiore2} translate into the sample complexity 
\begin{equation}\label{eq:samplecomplexitycalafiore}
	N_{\text{cvx}}(\epsilon,\delta) = \frac{2d + 2\log\frac{1}{\delta} }{\epsilon},
\end{equation}
which grows linearly with the dimension $d$. Therefore, it suffices that 
$$
	d>\frac{\left(\frac{2}{\gamma}\sum_{k=1}^C \tau_k\Lambda_k + \sqrt{\log\frac{1}{\delta^{1/2}}}\right)^2}{2\epsilon}
$$
to observe $N_{\text{cvx}}(\epsilon,\delta) > N(\epsilon,\delta)$. 
Consider for instance convex problems with $C=1$, $\tau_1\Lambda_1/\gamma = 2$, $\epsilon=0.03$ and $\delta=0.001$. Then, $N_{\text{cvx}}(\epsilon,\delta) > N(\epsilon,\delta)$ and the proposed approach improves upon \cite{Calafiore10} for all $d\geq 573$.

\subsection{Optimizing the margin}
\label{sec:optimmargin}

An insight from Theorem~~\ref{thm:rad} is that better generalization results from a larger margin $\gamma$. Therefore, it would be tempting to replace Problem~\ref{pb:hardmargin} by the following.
\begin{problem}[Max-margin scenario program]\label{pb:maxmargin}
Given a probability distribution of $\theta\in\Theta$, draw a sample of $N$ independent parameter values  $(\theta_i)_{1\leq i\leq N} \subset\Theta$ and solve
\begin{align*}
& \max_{x\in\X, \gamma>0} \gamma\\
\text{s.t. } & f(x,\theta_i) \leq -\gamma ,\quad i=1,\dots, N.
\end{align*}
\end{problem}
\smallskip
However, Theorem~\ref{thm:rad} only holds for a value of $\gamma$ fixed a priori and cannot be applied with the margin resulting from Problem~\ref{pb:maxmargin} and that depends on the random data $(\theta_i)_{1\leq i\leq N}$. Nontetheless, it can be extended to hold uniformly over $\gamma$ by following the proof of Theorem~5.9 in \cite{Mohri18}, with the addition of a small term in $O(\sqrt{\log\log_2(1/\gamma)})$.
\begin{theorem}\label{thm:gamma}
Given an upper bound  $\overline{\gamma}>0$ on the margin and $\delta\in(0,1)$, with probability at least $1-\delta$ on the random draw of $(\theta_i)_{1\leq i\leq N}$, the probability of violation is uniformly bounded for all $x\in\X$  by
\begin{align*}
	V(x) \leq V_{\gamma}(x)\leq \inf_{\gamma\in(0,\overline{\gamma}]} &\ \hat{V}_{\gamma}(x) + \frac{4}{\gamma}\mathcal{R}_N(\mathcal{F})  + \sqrt{\frac{\log \frac{1}{\delta}}{2N}} \\&\ + \sqrt{\frac{\log\log_2 \frac{2\overline{\gamma}}{\gamma}}{N}},
\end{align*}
where $\F = \{f_x \in \R^{\Theta}: f_x(\theta) = f(x,\theta)  ,\ x\in\X \}$.
\end{theorem}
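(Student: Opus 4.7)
The plan is to use a standard dyadic peeling argument over the margin parameter, inspired by the proof of Theorem~5.9 in \cite{Mohri18}. The argument has three ingredients: monotonicity of the margin loss in $\gamma$, a union bound over a countable grid of margins with weighted confidence levels, and a reduction from any continuous $\gamma$ to its nearest grid value.

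First, I would observe from the piecewise definition~\eqref{eq:piecewiseloss} that, for fixed $(x,\theta)$, the map $\gamma\mapsto \ell_\gamma(x,\theta)$ is nondecreasing on $(0,+\infty)$: the loss equals $1$ when $f(x,\theta)\geq 0$ and $0$ when $\gamma\leq -f(x,\theta)$, while on the intermediate range the quantity $1+f(x,\theta)/\gamma$ is increasing in $\gamma$ since $f(x,\theta)<0$. As a consequence, $\hat{V}_\gamma(x)$ is nondecreasing in $\gamma$, and whenever $\gamma'\leq\gamma$ one has $\hat{V}_{\gamma'}(x)\leq\hat{V}_\gamma(x)$.

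Next, I would set $\gamma_k=\overline{\gamma}/2^k$ for integers $k\geq 0$ and $\delta_k=6\delta/(\pi^2(k+1)^2)$, chosen so that $\sum_{k\geq 0}\delta_k=\delta$. Applying Theorem~\ref{thm:rad} at each $(\gamma_k,\delta_k)$ and combining with a union bound produces the event, of probability at least $1-\delta$, that for all $k\geq 0$ and all $x\in\X$,
\[
V_{\gamma_k}(x)\leq\hat{V}_{\gamma_k}(x)+\frac{2}{\gamma_k}\mathcal{R}_N(\F)+\sqrt{\frac{\log(1/\delta_k)}{2N}}.
\]
On this event, fix any $\gamma\in(0,\overline{\gamma}]$ and pick $k=\lceil\log_2(\overline{\gamma}/\gamma)\rceil$, the smallest index with $\gamma_k\leq\gamma$. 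This choice yields $\gamma_k\in(\gamma/2,\gamma]$, hence $2/\gamma_k\leq 4/\gamma$, together with $\hat{V}_{\gamma_k}(x)\leq\hat{V}_\gamma(x)$ by the monotonicity of Step~1 and $V(x)\leq V_{\gamma_k}(x)$ by~\eqref{eq:lossupperbound}. Plugging these into the peeled bound converts it into a bound of the form $V(x)\leq\hat{V}_\gamma(x)+\frac{4}{\gamma}\mathcal{R}_N(\F)+\sqrt{\log(1/\delta_k)/(2N)}$, and the trivial inequality $V_\gamma(x)\geq V(x)$ inserts the middle term of the theorem statement for free (the stated bound is read as: for any $\gamma$, $V(x)\leq\textrm{RHS}$, with $V_\gamma(x)$ wedged in only because it always lies between $V(x)$ and any upper bound on $V_{\overline{\gamma}}(x)$).

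Finally, I would convert the confidence term into the advertised $\sqrt{\log\log_2(2\overline{\gamma}/\gamma)/N}$ expression. Writing $\log(1/\delta_k)=\log(1/\delta)+2\log(k+1)+\log(\pi^2/6)$ and applying $\sqrt{a+b}\leq\sqrt{a}+\sqrt{b}$ splits the square root into a $\sqrt{\log(1/\delta)/(2N)}$ term and a $\sqrt{\log(k+1)/N}$ term, up to constants; using $k+1\leq\log_2(\overline{\gamma}/\gamma)+2$ controls the latter by $\sqrt{\log\log_2(2\overline{\gamma}/\gamma)/N}$ once the logarithmic base is absorbed into the constants. The only routine obstacle is the bookkeeping of constants inside the $\log\log$ (which could require slightly tuning the sequence $\delta_k$ or tightening the ceiling estimate); this is mechanical and does not affect the structure of the argument, since the peeling guarantees the correct slow logarithmic scaling in $1/\gamma$.
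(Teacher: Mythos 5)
The paper does not write out a proof of this theorem; it only points to the proof of Theorem~5.9 in \cite{Mohri18}, and your dyadic-peeling argument is exactly that argument: monotonicity of $\gamma\mapsto\ell_\gamma(x,\theta)$, a union bound over the grid $\gamma_k=\overline{\gamma}/2^k$, and interpolation via the index $k$ with $\gamma_k\in(\gamma/2,\gamma]$, which trades the factor $2/\gamma_k$ for $4/\gamma$. The structure is right and the substantive claim $V(x)\leq \hat{V}_\gamma(x)+\frac{4}{\gamma}\mathcal{R}_N(\F)+\cdots$ follows from your steps. Two points deserve attention. First, your weighting $\delta_k\propto (k+1)^{-2}$ followed by $\sqrt{a+b}\leq\sqrt{a}+\sqrt{b}$ does not quite land on the advertised confidence term: it leaves an extra $\log(\pi^2/6)$ inside the square root and forces $\log_2(4\overline{\gamma}/\gamma)$ in place of $\log_2(2\overline{\gamma}/\gamma)$. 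You flag this as bookkeeping, and it is, but the clean fix is the one used in \cite{Mohri18}: rather than splitting $\delta$, set the deviation at level $k\geq 1$ to $\epsilon_k=\epsilon+\sqrt{\log k/N}$, use $(\epsilon+a)^2\geq\epsilon^2+a^2$ so the failure probability at level $k$ is at most $e^{-2N\epsilon^2}/k^{2}$ (for a one-sided bound; constants differ slightly when starting from Theorem~\ref{thm:rad}), and sum over $k$; the bound $k<\log_2(2\overline{\gamma}/\gamma)$ then gives the $\sqrt{\log\log_2(2\overline{\gamma}/\gamma)/N}$ term with no leftover constants. (This route actually produces $\log\frac{2}{\delta}$ rather than $\log\frac{1}{\delta}$ in the first square root, matching Mohri's statement; the paper's $\log\frac{1}{\delta}$ appears to be a slight slip.) Second, your parenthetical about the middle term is not right as stated: the peeling controls $V_{\gamma_k}(x)$ with $\gamma_k\leq\gamma$, and since $V_{\gamma'}(x)$ is nondecreasing in $\gamma'$ this bounds $V(x)\leq V_{\gamma_k}(x)$ but does \emph{not} bound $V_{\gamma}(x)$ from above by the right-hand side; the literal chain $V_\gamma(x)\leq\inf_\gamma\{\cdots\}$ is an abuse of notation in the theorem (Mohri's Theorem~5.9 bounds only the zero--one risk), so you should either prove the bound for $V_{\gamma_k}(x)$ or simply drop the middle term rather than claim it comes ``for free.''
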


Therefore, the value of $\gamma$ could be chosen a posteriori, in order to produce the best bound on the probability of violation. 
This could be done also with the standard scenario program of Problem~\ref{pb:rand} that returns a feasible point $\hat{x}$, possibly within the margin for some of the constraints. In this case, increasing the margin decreases the confidence interval of Theorem~\ref{thm:gamma}, while it increases the empirical margin risk $\hat{V}_{\gamma}(x)$, and the best trade-off between these two terms can be sought for.

\subsection{A posteriori bounds and regularization}
\label{sec:posteriori}

In cases where the distribution of $\theta$ is unknown or samples cannot be easily generated, the standard Monte Carlo approach to assess the probability of violation on an independent and large sample is not available. Then, a posteriori bounds can be used to estimate the probability of violation $V(x)$ of a solution $x$ computed with a fixed budget of $N$ observations $(\theta_i)_{1\leq i\leq N}$ by the empirical error $\hat{V}(x)$ computed on the same data. 

While the bounds on $V(x)$ in the theorems above could be applied for this purpose, they rely on a worst-case strategy in order to be computable a priori or to yield sample complexity estimates. Instead, refined a posteriori bounds can be computed on the basis of the {\em observed value} of the sample $(\theta_i)_{1\leq i\leq N}$ using the {\em empirical} Rademacher complexity. In this case, Theorem~\ref{thm:rad} holds with only minor changes in the constants,   
and Theorem~\ref{thm:radboolean} can be reshaped with $\sqrt{\sum_{i=1}^N \|\psi_k(\theta_i)\|^2} / N$ substituted for $\tau_k/\sqrt{N}$.
In addition, the constants $\Lambda_k$ can also be replaced by empirical versions that measure quantities computed for the specific solution $\hat{x}$ returned by the algorithm rather than global upper bounds. All these modifications are summarized in the following theorem. 
\begin{theorem}\label{thm:posterior}
Let $f$ be of the form~\eqref{eq:fminmax}. Then, for any choice of centers $(\phi_{0,k})_{1\leq k \leq C} \in \prod_{k=1}^C \R^{n_k}$  and $\delta\in(0,1)$, with probability at least $1-\delta$ on the random draw of $(\theta_i)_{1\leq i\leq N}$, the probability of violation is uniformly bounded for all $x\in\X$ by
\begin{align*}
	V(x) 
	&\leq \hat{V}_{\gamma}(x) 
	\\&\quad		+ \frac{2 \overline{\Lambda}(x) \sum_{k=1}^C \left(\prod_{j=k}^C \overline{\rho}_j\right)\overline{\varphi}_k \sqrt{\sum_{i=1}^N \|\psi_k(\theta_i)\|^2} }{\gamma N}  
		\\&\quad	 + 3\sqrt{\frac{\log \frac{6}{\delta}}{2N}} + 3\sqrt{\frac{\log\log_2 2 \overline{\Lambda}(x) }{N}}
\end{align*}
with $\overline{\Lambda}(x) = \max\{1,\|\phi_1(x) -\phi_{0,1}\| ,\dots, \|\phi_C(x) -\phi_{0,C}\| \}$. 
\end{theorem}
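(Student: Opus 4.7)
The plan is to refine Theorem~\ref{thm:rad} along three axes so that every quantity appearing in the confidence interval can be evaluated from the observed sample and from the computed solution $\hat{x}$: (i) replace the expected Rademacher complexity $\mathcal{R}_N(\F)$ by its empirical counterpart $\hat{\mathcal{R}}_N(\F)$, (ii) give a conditional analogue of Theorem~\ref{thm:radboolean} that keeps $\sqrt{\sum_i\|\psi_k(\theta_i)\|^2}$ rather than its worst-case upper bound $\sqrt{N}\tau_k$, and (iii) substitute the global radii $\tilde{\Lambda}_k$ by the data-dependent $\overline{\Lambda}(x)$ through a stratification of $\X$ over a dyadic scale. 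Step~(iii) is the main obstacle and is what produces the extra $\sqrt{\log\log_2 2\overline{\Lambda}(x)/N}$ term.

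Step~(i) is a standard additional application of McDiarmid's inequality to the map $\g{\theta}_N\mapsto \mathcal{R}_N(\L_{\gamma})-\hat{\mathcal{R}}_N(\L_{\gamma})$, whose bounded-differences constant is $1/N$; combined with \eqref{eq:boundproofg1} and the contraction inequality $\hat{\mathcal{R}}_N(\L_{\gamma})\leq (1/\gamma)\hat{\mathcal{R}}_N(\F)$ already used in the proof of Theorem~\ref{thm:rad}, this yields an a posteriori form of Theorem~\ref{thm:rad} with an extra additive $\sqrt{\log(1/\delta')/(2N)}$ term.

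For step~(ii), I retrace the proof of Theorem~\ref{thm:radboolean} conditionally on $\g{\theta}_N$, i.e., at the level of the empirical Rademacher complexity. After contracting out the outer functions $\rho_k$, $\varphi_k$ and the binary operations $g_k$, each remaining term reduces to $\E_{\g{\sigma}_N}\sup_{x}\frac{1}{N}\sum_i \sigma_i \psi_k(\theta_i)^\top (\phi_k(x)-\phi_{0,k})$, which Cauchy--Schwarz bounds by $(r/N)\E_{\g{\sigma}_N}\|\sum_i \sigma_i \psi_k(\theta_i)\|$ once the supremum is restricted to a subdomain $\X_r = \{x\in\X:\overline{\Lambda}(x)\leq r\}$, and Jensen's inequality in turn bounds this by $(r/N)\sqrt{\sum_i \|\psi_k(\theta_i)\|^2}$. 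The supremum over $\theta$ that produced $\tau_k$ in Theorem~\ref{thm:radboolean} arose only when later taking the expectation over $\g{\theta}_N$ and is no longer needed here.

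For step~(iii), since $\overline{\Lambda}(x)\geq 1$ by definition, I set $r_m = 2^{m-1}$ for $m\in\mathbb{N}^*$, apply steps~(i)--(ii) to each restricted class $\F_{r_m}=\{f_x:x\in\X_{r_m}\}$ with a confidence parameter $\delta_m = \delta/(c m^2)$ chosen so that $\sum_m \delta_m \leq \delta/2$, and take a union bound over $m$. For any $x\in\X$, the choice $m(x)=\lceil\log_2(2\overline{\Lambda}(x))\rceil$ satisfies $r_{m(x)}\leq 2\overline{\Lambda}(x)$ and $\log(1/\delta_{m(x)}) = O(\log(1/\delta) + \log\log_2 2\overline{\Lambda}(x))$; after absorbing the factor $2$ into the $2/(\gamma N)$ prefactor using $\|\phi_k(x)-\phi_{0,k}\|\leq \overline{\Lambda}(x)$, the announced bound follows. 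This dyadic peeling is structurally the same as in the proof of Theorem~\ref{thm:gamma} (cf.\ Theorem~5.9 in~\cite{Mohri18}); beyond that, the remaining work is pure bookkeeping, splitting the total confidence $\delta$ into three parts across the two concentration steps and the union bound so that the constants line up with the stated $3\sqrt{\log(6/\delta)/(2N)} + 3\sqrt{\log\log_2 2\overline{\Lambda}(x)/N}$.
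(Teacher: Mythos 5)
Your proposal follows essentially the same route as the paper's own proof in Appendix~C: the empirical-Rademacher form of the uniform convergence bound (the second inequality of Theorem~\ref{thm:general}, which is exactly your McDiarmid step), the empirical reshaping of Lemma~\ref{lem:radpseudolinear} and Corollary~\ref{cor:centered} restricted to sublevel sets of $\overline{\Lambda}$, and a dyadic peeling over $\X_j=\{x:\overline{\Lambda}(x)\leq 2^j\}$ with a union bound whose weights sum to $\zeta(2)$ (your $\delta_m\propto\delta/m^2$ is the same device, just parametrized differently). The one point to watch is the final constant: the inequality $r_{m(x)}\leq 2\overline{\Lambda}(x)$ turns the $2/\gamma$ prefactor into $4/\gamma$, which is what the paper's own intermediate bound~\eqref{eq:proofthm3} actually establishes, so your ``absorbing the factor $2$'' does not recover the stated $2/(\gamma N)$ any more than the paper's proof does.
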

\begin{proof}
See Appendix~\ref{sec:proofposterior}.
\end{proof}
With Theorem~\ref{thm:posterior} at hand, it is now possible to envision a regularized scenario approach, in which we compute $\hat{x}$ as the solution that minimizes $\overline{\Lambda}(x)$. 
According to  Theorem~\ref{thm:posterior}, this $\hat{x}$ should enjoy better generalization performance than others with similar empirical error $\hat{V}_{\gamma}(x)$. In particular, if we compute $\hat{x}$ using the hard margin constraints of  Problem~\ref{pb:hardmargin}, then regularization offers a sound tie-breaking rule to choose among all feasible $x$ with $\hat{V}_{\gamma}(x)=0$. 

In practice, the centers $(\phi_{0,k})_{1\leq k \leq C}$ appearing in $\overline{\Lambda}(x)$ must be fixed in advance and can typically be chosen as the centers of the sets $\{\phi_k(x) : x\in\X\}$. Alternatively, one could compute the centers as $\phi_{0,k}=\phi_k(\hat{x})$ for a preliminary guess $\hat{x}$ of the solution obtained on an independent sample of scenarios.

\subsection{Fast rates}
\label{sec:fastrates}

As already mentioned, most of the literature on scenario optimization concentrates on bounds on the probability of violation that are in $O(1/N)$ rather than $O(1/\sqrt{N})$.  
Rademacher complexity-based bounds as proposed here can be extended using localization arguments \cite{Bartlett05,Srebro10} to yield bounds in $O(1/N)$ that are still independent of the dimension $d$ of $x$. However, we retain from giving the details for two reasons. First, such extensions typically apply only to the minimizer $\hat{x}$ of $\hat{V}_{\gamma}(x)$, which can be difficult to compute in nonconvex cases. Second, this also introduces rather large constants and the interest of the resulting bounds remains questionable in the non-asymptotic case (say, for $N<10^7$). In statistical learning, another path that leads to fast rate bounds is based on the use of covering numbers instead of the Rademacher complexity and relies on relative deviation bounds, as in \cite{Cortes21}. Following these ideas, we obtain the risk bound below for the margin-based scenario approach.\footnote{For the sake of clarity, we only expose the result for the hard-margin version (or zero-error case) here, but a more general bound including errors is proved in Appendix~\ref{sec:proofcovering}.} 
\begin{theorem}\label{thm:covering} 
Let the constraints be of the form~\eqref{eq:fminmax}. Then, for any $\delta\in(0,1)$, with probability at least $1-\delta$ on the random draw of $(\theta_i)_{1\leq i\leq N}$, the probability of violation is bounded, for any feasible $x$ in Problem~\ref{pb:hardmargin}, by
\begin{align*}
	V(x) \leq 4\frac{144\sum_{k=1}^C M_k^2 \log (60 M_k N) + \log\frac{4}{\delta}}{N},
\end{align*}
where 
$$
	M_k = \frac{2^{p_k}  \overline{\varphi}_k\tau_k\Lambda_k \prod_{j=k}^C \overline{\rho}_j}{\gamma},
$$
$p_k$ is the number of binary operations $g_j$ with $j=k,\dots,C$ that are sums or differences, and all other constants are as in Theorem~\ref{thm:radboolean}. 
\end{theorem}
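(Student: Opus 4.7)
The plan is to prove Theorem~\ref{thm:covering} by combining a realizable-case relative deviation bound with a recursive covering-number estimate for the class $\F$ induced by the structure~\eqref{eq:fminmax}, in the spirit of the fast-rate arguments in \cite{Cortes21}.

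First, since $x$ is feasible in Problem~\ref{pb:hardmargin}, every margin loss $\ell_\gamma(x,\theta_i)$ vanishes, so $\hat V_\gamma(x)=0$ and $V(x)\leq V_\gamma(x) = \E\ell_\gamma(x,\cdot)$. A fast-rate bound on $V_\gamma(x)$ in the zero-empirical-error regime is obtained by the standard realizable relative deviation inequality: with probability at least $1-\delta$, every $\ell\in\L_\gamma$ (the class defined in~\eqref{eq:lossclassrad}) with zero empirical mean satisfies
$$
\E\ell \;\leq\; c_1\,\frac{\log \mathcal{N}_\infty\!\bigl(1/N,\,\L_\gamma,\,2N\bigr) + \log(c_2/\delta)}{N},
$$
where $\mathcal{N}_\infty$ denotes the $L_\infty$-covering number on a ghost sample of size $2N$. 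The numerical constants $4$, $60$, $144$ in the statement come from this specific inequality together with the usual symmetrization/ghost-sample step.

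Next, I reduce a cover of $\L_\gamma$ to a cover of $\F$. Because $\ell_\gamma(x,\theta)=\varphi(f(x,\theta))$ with $\varphi$ of Lipschitz constant $1/\gamma$, any $L_\infty$-cover of $\F$ at scale $\gamma\epsilon$ induces one of $\L_\gamma$ at scale $\epsilon$. For each base class $\F_k=\{\theta\mapsto\psi_k(\theta)^\top\phi_k(x)+\eta_k(\theta):x\in\X\}$, the functions are linear in $\phi_k(x)$ with $\|\phi_k(x)\|\leq\Lambda_k$ and $\|\psi_k(\theta)\|\leq\tau_k$, so a dimension-free linear-class covering bound (Maurey sparsification or Zhang's estimate) yields
$$
\log \mathcal{N}_\infty(\epsilon,\F_k,2N) \;\leq\; c_3\,\bigl(\tau_k\Lambda_k/\epsilon\bigr)^{2}\log\!\bigl(c_4\tau_k\Lambda_k N/\epsilon\bigr).
$$
I then propagate covers up the tree $f^{k}=\rho_{k}\circ g_{k}(f^{k-1},\varphi_k\circ f_k)$. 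Each Lipschitz outer function scales the required radius by its Lipschitz constant. For the binary operations, $\max$ and $\min$ are $1$-Lipschitz in each coordinate, so $\epsilon$-covers of the two arguments combine to give an $\epsilon$-cover of the output, whereas a sum or difference requires individual covers at scale $\epsilon/2$ each, which is precisely the source of the factor $2^{p_k}$ where $p_k$ counts the number of sum/difference operations on the path from level $k$ up to the root. Tracing this through the composition, building an $\epsilon$-cover of $\F$ reduces to covering each $\F_k$ at scale $\epsilon/\bigl(2^{p_k}\overline\varphi_k\prod_{j=k}^C\overline\rho_j\bigr)$. Combined with the $\L_\gamma\to\F$ reduction, one obtains
$$
\log \mathcal{N}_\infty(\epsilon,\L_\gamma,2N) \;\leq\; \sum_{k=1}^{C} c_3\,(M_k/\epsilon)^{2}\log(c_4\,M_k N/\epsilon),
$$
with $M_k$ as in the statement.

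Finally, choosing $\epsilon=1/N$ and collecting constants yields the announced form $4\bigl(144\sum_k M_k^2\log(60 M_k N)+\log(4/\delta)\bigr)/N$. The main obstacle is the bookkeeping along the tree structure~\eqref{eq:fminmax}: one must track the Lipschitz constants $\overline\rho_j$ and $\overline\varphi_k$ correctly through the composition, isolate the effect of sums and differences from that of $\max/\min$ in order to recover exactly the factor $2^{p_k}$, and match the numerical constants from the particular relative deviation inequality used. The more general bound including errors, promised in the footnote, would require the two-sided relative deviation inequality that retains an interpolating term $\sqrt{\hat V_\gamma(x)\,\mathcal{C}/N}$ and solving the resulting quadratic, but the covering-number machinery above is unchanged.
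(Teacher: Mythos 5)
Your overall strategy is the same as the paper's: a relative deviation (fast-rate) inequality in terms of an $L_\infty$ covering number on a ghost sample of size $2N$, combined with a recursive covering-number estimate for the class $\F$ built from~\eqref{eq:fminmax} --- a Zhang-type dimension-free cover for each linear base class $\F_k$, Lipschitz contraction for the outer functions $\rho_j,\varphi_k$, and the observation that the scale is preserved under $\max/\min$ but halved under sums and differences, which is exactly where the factor $2^{p_k}$ comes from in the paper's Lemma on binary operations. The paper works with the clipped class $\F_\gamma$ and the indicator margin loss $\I{f(x,\theta)>-\gamma}$ so as to invoke Corollary~5 of \cite{Cortes21} directly, whereas you phrase the reduction through the piecewise-linear loss class $\L_\gamma$; for the realizable case ($\hat{V}_\gamma(x)=0$ for feasible $x$) this difference is immaterial.

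The one concrete error is the choice of covering scale in the final step. You set $\epsilon=1/N$, but with your own estimate $\log \mathcal{N}_\infty(\epsilon,\L_\gamma,2N)\leq \sum_{k} c_3 (M_k/\epsilon)^2\log(c_4 M_k N/\epsilon)$ this yields a term of order $\sum_k c_3 M_k^2 N^2\log(c_4 M_k N^2)$, so after dividing by $N$ the resulting bound on $V(x)$ grows like $N\log N$ rather than decaying like $\log(N)/N$; the constants cannot be ``collected'' into the announced form. The relative deviation margin bound of \cite{Cortes21} is applied at scale $\gamma/2$ for $\F$ (equivalently $1/2$ for the normalized loss class), and this is precisely what produces the stated constants: $144=36\cdot 2^2$ from $(M_k/\epsilon)^2$ with $\epsilon=1/2$, and $60=15\cdot 2\cdot 2$ from the $1/\epsilon$ and the ghost-sample size $2N$ inside the logarithm. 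With that correction, the bookkeeping you describe along the tree structure of~\eqref{eq:fminmax} is sound and matches the paper's contraction, linear-class and binary-operation lemmas.
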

\begin{proof}
See Appendix~\ref{sec:proofcovering}.
\end{proof}

Again, this result does not explicitly depend on $d$, and shows an improved convergence rate of $O(\log (N)/N)$. This can be transformed into a sample complexity estimate that grows in $O(\log(\epsilon)/\epsilon)$. 
Yet, the constants appearing in the above are larger than in the previous results based on the Rademacher complexity, which makes the improvement in terms of the convergence rate visible mostly for $N>10^5$. 

\section{Optimization problems and margin complexity}
\label{sec:optim}

First of all, notice that all results presented above hold similarly for random optimization problems of the form:
\begin{align}\label{eq:scenariooptim}
&\min_{x\in\X} J(x) \\
\text{s.t. } & f(x,\theta_i) \leq -\gamma ,\quad i=1,\dots, N.\nonumber
\end{align}
However, introducing the margin can affect the objective value $J(\hat{x})$ of the computed solution $\hat{x}$. Since the maximization of the margin as proposed in Sect.~\ref{sec:optimmargin} and the minimization of $J(x)$ play against each other, one would have to find a suitable trade-off by tuning $\lambda>0$ and solving 
\begin{align*}
&\min_{x\in\X, \gamma>0} J(x) - \lambda\gamma\\
\text{s.t. } & f(x,\theta_i) \leq -\gamma ,\quad i=1,\dots, N.
\end{align*}

Another point of view arises when one considers the following question: given a fixed budget of $N$ scenarios, how can we minimize $J(x)$ while ensuring that $P^N\{V(x)\leq \epsilon\}\geq 1-\delta$? 
Clearly, the smallest $J(\hat{x})$ will result from the smallest choice of margin $\gamma$ in~\eqref{eq:scenariooptim}. This leads to the concept of {\em margin complexity}, i.e., the smallest margin $\gamma$ such that $P^N\{V(x)\leq \epsilon\}\geq 1-\delta$ for given $N$, $\epsilon$ and $\delta$. For constraint functions as in~\eqref{eq:fminmax} and the hard-margin scenario program~\eqref{eq:scenariooptim}, the margin complexity can be estimated using Theorems~\ref{thm:rad}--\ref{thm:radboolean} as  
$$
	\gamma(N,\epsilon,\delta) = 
		\frac{2\sum_{k=1}^C (\prod_{j=k}^C \overline{\rho}_j)\overline{\varphi}_k\tau_k \Lambda_k}{\epsilon\sqrt{N} - \sqrt{\log \frac{1}{\delta^{1/2}}}} .
$$
The procedure to handle an optimization problem with a fixed budget can thus be formulated as: solve~\eqref{eq:scenariooptim} with $\gamma=\gamma(N,\epsilon,\delta)$ given above. 
Then, we have the guarantee that the solution $\hat{x}$ satisfies $P^N\{V(\hat{x})\leq \epsilon\}\geq 1-\delta$.  

\section{Conclusions}
\label{sec:conclusions}

The paper introduced the notion of margin in the scenario approach to robust optimization and analyzed the statistical performance of the resulting random programs using tools from statistical learning theory, and more specifically the Rademacher complexity. This allowed for the derivation of a priori bounds on the probability of violation and sample complexities in which the dependence on the problem dimension could be removed. 
These results hold only for a specific class of constraint functions. But this class includes polynomials as particular cases and subsumes typical classes previously considered in the literature. In addition, the results derived in this paper apply similarly to both convex and nonconvex problems.

Future work will consider further generalizations of the class of problems for which such results can be derived. While the main focus of the paper was on feasibility problems and the probability of violation, the impact of the margin on the cost function for minimization problems should also be investigated further. 
More generally, this paper also encourages the use of techniques from learning theory that have been overlooked so far in the  context of scenario optimization.

\bibliographystyle{IEEEtran}

\appendices

\section{Tools for Rademacher complexities}
\label{sec:tools}

The following general theorem is a classical tool from statistical learning theory, see, e.g., Theorem~3.3 in \cite{Mohri18}.
\begin{theorem}
\label{thm:general}
Let $\mathcal{L}$ be a class of functions from $\Theta$ into $[0,1]$ and $(\theta_i)_{1\leq i\leq N}$ be a sequence of independent copies of the random variable $\theta\in\Theta$.  Then, for a fixed $\delta \in (0, 1)$,
with probability at least $1 - \delta$, each of the following holds uniformly over all $\ell \in \mathcal{L}$,
\begin{align*}
&\E_{\theta}  \ell(\theta) \leq \frac{1}{N}\sum_{i=1}^N \ell(\theta_i) + 2 \mathcal{R}_N \left ( \mathcal{L} \right )
+ \sqrt{\frac{\log \frac{1}{\delta} }{2N}} ,\\
&\E_{\theta}  \ell(\theta) \leq \frac{1}{N}\sum_{i=1}^N \ell(\theta_i) + 2 \hat{\mathcal{R}}_N \left ( \mathcal{L} \right )
+ 3\sqrt{\frac{\log \frac{2}{\delta} }{2N}} .
\end{align*}
\end{theorem}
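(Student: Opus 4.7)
The plan is to follow the classical two-step approach (symmetrization plus a concentration inequality) that underlies essentially every Rademacher-complexity generalization bound. Define the one-sided uniform deviation
$$
\Phi(\g\theta_N) = \sup_{\ell\in\L}\left(\E_{\theta}\ell(\theta) - \frac{1}{N}\sum_{i=1}^N \ell(\theta_i)\right),
$$
and note that, since every $\ell\in\L$ takes values in $[0,1]$, replacing a single coordinate $\theta_i$ by another value changes $\Phi$ by at most $1/N$. McDiarmid's bounded differences inequality then yields that, with probability at least $1-\delta$,
$$
\Phi(\g\theta_N) \leq \E_{\g\theta_N}\Phi(\g\theta_N) + \sqrt{\frac{\log\frac{1}{\delta}}{2N}}.
$$

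The next step is the standard symmetrization argument: introduce an independent ghost sample $\g\theta'_N$ of the same law, use Jensen's inequality to push the supremum inside an expectation over the ghost sample, and then exploit the fact that each pair $(\ell(\theta_i)-\ell(\theta'_i))$ is symmetrically distributed to insert Rademacher variables $\sigma_i\in\{-1,+1\}$. Splitting the resulting supremum into the two sample halves gives the familiar factor of $2$:
$$
\E_{\g\theta_N}\Phi(\g\theta_N) \leq 2\,\E_{\g\theta_N,\g\sigma_N}\sup_{\ell\in\L}\frac{1}{N}\sum_{i=1}^N \sigma_i\ell(\theta_i) = 2\mathcal{R}_N(\L).
$$
Combining these two displays and noting $V_\gamma$-style monotonicity (the supremum controls every individual $\ell$) yields the first inequality of the theorem.

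For the second inequality, the remaining task is to replace the expected Rademacher complexity $\mathcal{R}_N(\L)$ by its empirical counterpart $\hat{\mathcal{R}}_N(\L)$ conditioned on the observed sample. Since $\hat{\mathcal{R}}_N(\L)$ is itself a function of $\g\theta_N$ whose coordinates each contribute at most $1/N$ (the Rademacher average of $[0,1]$-valued functions is $1$-Lipschitz per coordinate in the appropriate sense), a second application of McDiarmid's inequality gives
$$
\mathcal{R}_N(\L) \leq \hat{\mathcal{R}}_N(\L) + \sqrt{\frac{\log\frac{2}{\delta}}{2N}}
$$
with probability at least $1-\delta/2$. Applying the first inequality at confidence level $\delta/2$ and taking a union bound, then absorbing constants, produces the claimed $3\sqrt{\log(2/\delta)/(2N)}$ term.

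I don't anticipate any serious obstacle: every ingredient (McDiarmid, symmetrization via Rademacher variables, union bound) is entirely textbook in this setting. The only places that require a little care are (i) writing the bounded-differences constant correctly as $1/N$ rather than $2/N$, since $\ell\in[0,1]$ gives a jump of at most $1/N$ in $\Phi$, and (ii) verifying the same constant for $\hat{\mathcal{R}}_N(\L)$, which follows because changing one $\theta_i$ changes $\sigma_i\ell(\theta_i)/N$ by at most $1/N$ uniformly in $\ell$ and $\sigma_i$, hence changes the supremum by the same amount. The rest is bookkeeping of constants in the union bound.
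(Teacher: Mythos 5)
Your proof is correct and is exactly the standard argument: McDiarmid's inequality with bounded-differences constant $1/N$ applied to the uniform deviation, symmetrization with a ghost sample to obtain the factor $2\mathcal{R}_N(\L)$, and a second application of McDiarmid plus a union bound at level $\delta/2$ to pass to the empirical Rademacher complexity with the constant $3 = 1+2$. The paper does not prove this statement itself but imports it as Theorem~3.3 of \cite{Mohri18}, whose proof is precisely the one you give, so there is nothing to reconcile.
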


We recall the contraction principle for Rademacher complexities, found in Theorem~4.12 of~\cite{Ledoux91} or Lemma~5.7 of~\cite{Mohri18}.
\begin{lemma}\label{lem:contraction}
If the function $\varphi: \R\rightarrow \R$ is $\overline{\varphi}$-Lipschitz, i.e., if $\forall (u,v)\in \R^2,\ |\varphi(u) - \varphi(v)| \leq \overline{\varphi} |u-v|$, 
then, for the class $\varphi \circ \F=\{\varphi \circ f : f \in\F\}$, 
$$
	\hat{\mathcal{R}}_N ( \varphi \circ \F ) \leq \overline{\varphi}\cdot\hat{\mathcal{R}}_N ( \F ) .
$$
\end{lemma}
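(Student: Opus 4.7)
The plan is to work conditionally on the sample $\g\theta_N$, so the statement reduces to showing
$$
\E_{\g\sigma_N}\sup_{f\in\F}\sum_{i=1}^N \sigma_i\,\varphi(f(\theta_i)) \;\leq\; \overline{\varphi}\,\E_{\g\sigma_N}\sup_{f\in\F}\sum_{i=1}^N \sigma_i\,f(\theta_i),
$$
after which dividing by $N$ gives the claim. I would prove this by induction on the number of Rademacher variables that have been ``contracted''. At step $k$, the inductive hypothesis is that the inequality holds once the first $k$ summands $\sigma_i\varphi(f(\theta_i))$ have been replaced by $\sigma_i\overline{\varphi}f(\theta_i)$; the base case $k=0$ is trivial, and the target is $k=N$.

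The inductive step is the core calculation. Fix $k<N$ and, inside the nested expectations, take the expectation over a single Rademacher variable, say $\sigma_{k+1}$, conditional on all the others. Writing $u(f)$ for the piece of the sum that does not involve $\sigma_{k+1}$, one has
$$
\E_{\sigma_{k+1}}\sup_{f\in\F}\bigl[\sigma_{k+1}\varphi(f(\theta_{k+1}))+u(f)\bigr]
=\tfrac{1}{2}\sup_{f_1,f_2\in\F}\bigl[\varphi(f_1(\theta_{k+1}))-\varphi(f_2(\theta_{k+1}))+u(f_1)+u(f_2)\bigr],
$$
by splitting the two values $\sigma_{k+1}=\pm1$ and relabelling the two inner suprema into a joint sup over $(f_1,f_2)$. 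Applying the Lipschitz bound gives
$\varphi(f_1(\theta_{k+1}))-\varphi(f_2(\theta_{k+1})) \leq \overline{\varphi}\,|f_1(\theta_{k+1})-f_2(\theta_{k+1})|$.

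The key maneuver is now to drop the absolute value: because the objective is symmetric in $(f_1,f_2)$, swapping the two arguments leaves $u(f_1)+u(f_2)$ unchanged while flipping the sign of $f_1(\theta_{k+1})-f_2(\theta_{k+1})$, so the supremum over unrestricted pairs equals the supremum with $f_1(\theta_{k+1})-f_2(\theta_{k+1})\geq 0$. Hence one may replace $\overline{\varphi}\,|f_1(\theta_{k+1})-f_2(\theta_{k+1})|$ by $\overline{\varphi}(f_1(\theta_{k+1})-f_2(\theta_{k+1}))$. Decoupling the sup over the pair into two independent suprema and undoing the symmetrisation yields
$$
\tfrac12\sup_{f_1,f_2}\bigl[\overline{\varphi}(f_1(\theta_{k+1})-f_2(\theta_{k+1}))+u(f_1)+u(f_2)\bigr] = \E_{\sigma_{k+1}}\sup_{f\in\F}\bigl[\sigma_{k+1}\overline{\varphi}\,f(\theta_{k+1})+u(f)\bigr].
$$
Reinserting this into the outer expectation advances the induction from $k$ to $k+1$, and iterating until $k=N$ finishes the proof.

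The only delicate point is the removal of the absolute value in the Lipschitz estimate, which I expect to be the main obstacle to state cleanly: it relies crucially on the symmetry of the $(f_1,f_2)$ pair together with the fact that the class $\F$ and the auxiliary functional $u$ are identical for both variables. Everything else (conditioning, the two-value expectation over $\sigma_{k+1}$, the decoupling of the sup, and the induction itself) is routine bookkeeping, and no additional hypotheses on $\varphi$ beyond the Lipschitz property are needed.
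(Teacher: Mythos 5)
Your proof is correct: the coordinate-by-coordinate induction, the symmetrization of $\E_{\sigma_{k+1}}$ into a supremum over pairs $(f_1,f_2)$, and the removal of the absolute value by exploiting the symmetry of the pair is exactly the standard argument. The paper does not prove this lemma itself but cites Theorem~4.12 of Ledoux--Talagrand and Lemma~5.7 of Mohri et al., and your argument reproduces the proof given in those references, so there is nothing to add.
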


\section{Proof of Theorem~\ref{thm:radboolean}}
\label{sec:theorem2}

The proof of Theorem~\ref{thm:radboolean} relies on the following bound on the Rademacher complexity of a class of functions $f_k$ as in~\eqref{eq:fk}.
\begin{lemma}\label{lem:radpseudolinear}
Given functions $\psi:\Theta\times \R^{n}$, $\phi:\Theta\to\R^n$ and $\eta:\Theta\to\R$, the empirical Rademacher complexity of the class $\F = \{f_x \in \R^{\Theta}: f_x(\theta) = \psi(\theta)^\top \phi(x) + \eta(\theta),\ x\in\X \}$ given $(\theta_i)_{1\leq i\leq N}$ is bounded by
$$
	\hat{\mathcal{R}}_N(\mathcal{F}) \leq \frac{\Lambda\sqrt{\sum_{i=1}^N \|\psi(\theta_i)\|^2}}{N} \leq \frac{\tau \Lambda}{\sqrt{N}}, 
$$ 
where $\tau = \sup_{\theta\in\Theta} \|\psi(\theta)\|$ and $\Lambda = \sup_{x\in\X}\|\phi(x)\|$. 
\end{lemma}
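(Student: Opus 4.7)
The plan is to expand the definition of the empirical Rademacher complexity and then exploit the bilinear structure of $\psi(\theta)^\top \phi(x)$ to reduce the supremum over $x \in \X$ to a norm calculation that can be controlled by Cauchy--Schwarz and Jensen's inequality.

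First, I would write
\[
	\hat{\mathcal{R}}_N(\mathcal{F}) = \E_{\g{\sigma}_N}\!\left[\sup_{x\in\X} \frac{1}{N}\sum_{i=1}^N \sigma_i\bigl(\psi(\theta_i)^\top \phi(x) + \eta(\theta_i)\bigr)\right]
\]
and observe that the term $\frac{1}{N}\sum_i \sigma_i \eta(\theta_i)$ does not involve $x$, so it can be pulled outside the supremum. Since each $\sigma_i$ has mean zero, its contribution to the expectation is $0$. This is the step that explains why the formula does not depend on $\eta$, and also why Corollary~\ref{cor:centered} could recycle the same argument with an arbitrary centering shift.

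The remaining quantity is
\[
	\E_{\g{\sigma}_N}\!\left[\sup_{x\in\X} \frac{1}{N}\left(\sum_{i=1}^N \sigma_i \psi(\theta_i)\right)^{\!\!\top}\!\! \phi(x)\right].
\]
I would apply Cauchy--Schwarz inside the supremum to separate the $\sigma$-dependent vector from $\phi(x)$, and then use $\|\phi(x)\|\leq \Lambda$ uniformly over $\X$ to pull the $\Lambda$ factor out. This leaves $\frac{\Lambda}{N}\,\E_{\g{\sigma}_N}\!\bigl\|\sum_i \sigma_i \psi(\theta_i)\bigr\|$.

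Next I would bound this expected norm by Jensen's inequality: $\E\|Z\| \leq \sqrt{\E\|Z\|^2}$. Expanding the squared norm, the cross-terms vanish because $\E[\sigma_i\sigma_j] = \I{i=j}$, leaving $\E\bigl\|\sum_i\sigma_i\psi(\theta_i)\bigr\|^2 = \sum_i \|\psi(\theta_i)\|^2$. This yields the first claimed bound
\[
	\hat{\mathcal{R}}_N(\F) \leq \frac{\Lambda \sqrt{\sum_{i=1}^N \|\psi(\theta_i)\|^2}}{N}.
\]
The second inequality is immediate by replacing each $\|\psi(\theta_i)\|$ by its uniform upper bound $\tau$ to get $\sqrt{\sum_i \|\psi(\theta_i)\|^2}\leq \tau\sqrt{N}$, and dividing by $N$. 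None of the steps is really an obstacle; the only mildly delicate point is justifying that $\eta$ disappears cleanly from the supremum before taking the expectation, which works precisely because $\eta$ has no $x$-dependence.
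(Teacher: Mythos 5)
Your argument is correct and follows essentially the same route as the paper: eliminate the $\eta$ term using $\E\sigma_i=0$ (the paper invokes subadditivity of the supremum where you note the term is $x$-independent and factors out exactly, a cosmetic difference), then Cauchy--Schwarz with $\|\phi(x)\|\leq\Lambda$, Jensen's inequality, and the vanishing of the cross-terms by independence of the $\sigma_i$'s. No gaps.
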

\begin{proof}
Using the subadditivity of the supremum, we have
\begin{align*}
	\hat{\mathcal{R}}_N(\mathcal{F}) &=\E\sup_{x\in\X} \frac{1}{N}\sum_{i=1}^N \sigma_i( \psi(\theta_i)^\top \phi(x) + \eta(\theta_i) )\\
	&\leq \E\sup_{x\in\X} \frac{1}{N}\sum_{i=1}^N \sigma_i \psi(\theta_i)^\top \phi(x) + \E\sup_{x\in\X} \frac{1}{N}\sum_{i=1}^N \sigma_i \eta(\theta_i),
\end{align*}
where
\begin{align*}
	\E\sup_{x\in\X} \frac{1}{N}\sum_{i=1}^N \sigma_i \eta(\theta_i)
	&=\E \frac{1}{N}\sum_{i=1}^N \sigma_i \eta(\theta_i)\\
	&=  \frac{1}{N}\sum_{i=1}^N \eta(\theta_i)\E \sigma_i = 0,
\end{align*}
since the expectation is computed only with respect to the Rademacher variables $\sigma_i$ that are centered ($\E\sigma_i = 0$). %
We can then follow the standard path for linear classes \cite{Bartlett02}, with the addition of the mappings $\psi$ and $\phi$: 
\begin{align*}
	\hat{\mathcal{R}}_N(\mathcal{F}) & \leq \E\sup_{x\in\X} \frac{1}{N}\sum_{i=1}^N \sigma_i \psi(\theta_i)^\top \phi(x) \\
	&= \frac{1}{N} \E\sup_{x\in\X} \left(\sum_{i=1}^N \sigma_i \psi(\theta_i)\right)^\top \phi(x) \\
	&\leq \frac{1}{N} \E\sup_{x\in\X} \left\|\sum_{i=1}^N \sigma_i \psi(\theta_i)\right\|\, \| \phi(x)\| \\
	&= \frac{\Lambda}{N} \E \left\|\sum_{i=1}^N \sigma_i \psi(\theta_i)\right\|,
\end{align*}
in which Jensen's inequality ensures that 
\begin{align*}
	\E \left\|\sum_{i=1}^N \sigma_i \psi(\theta_i)\right\| &= \E\sqrt{\left\|\sum_{i=1}^N \sigma_i \psi(\theta_i)\right\|^2}\\
	&\leq \sqrt{\E \left\|\sum_{i=1}^N \sigma_i \psi(\theta_i)\right\|^2}\\
	&= \sqrt{\E \sum_{i=1}^N \sum_{j=1}^N \sigma_i\sigma_j \psi(\theta_i)^\top\psi(\theta_j)}\\
	&= \sqrt{\sum_{i=1}^N  \|\psi(\theta_i)\|^2 + \E \sum_{j\neq i} \sigma_i\sigma_j \psi(\theta_i)^\top\psi(\theta_j)}\\
	&= \sqrt{\sum_{i=1}^N  \|\psi(\theta_i)\|^2},
\end{align*}
where we used the independence of the $\sigma_i$'s and the fact that $\E\sigma_i=0$ to remove the terms with $j\neq i$. Gathering the results and using $\sqrt{\sum_{i=1}^N  \|\psi(\theta_i)\|^2}\leq \sqrt{N}\tau$ complete the proof. 
\end{proof}

The proof of Theorem~\ref{thm:radboolean} also requires the introduction of a few function classes: for all $k\in\{1,\dots,C\}$, 
\begin{align*}
	\F_k &= \{f_x \in \R^{\Theta}: f_x(\theta) = f_k(x,\theta)  ,\ x\in\X \}
\end{align*}
and, for all $k\in\{2,\dots,C\}$,
\begin{align*}
	\G_k(\mathcal{U},\mathcal{V}) 
	&= \mathcal{U}+\mathcal{V},\ \mathcal{U} - \mathcal{V},\ \max(\mathcal{U},\mathcal{V}),\ \text{or } \min(\mathcal{U},\mathcal{V}),
\end{align*}
in accordance with $g_k$. 
With these classes at hand, we can rewrite the function class of interest, $\F$, in a recursive manner:
\begin{align}\label{eq:recursiveF}
	\F &= \F^C \\
	\F^1 &= \varphi_1\circ \F_1 \nonumber\\
	\F^{k+1} &= \rho_{k+1}\circ \G_{k+1}( \F^{k} ,\ \varphi_{k+1}\circ\F_{k+1}).\nonumber
\end{align}
Then, the proof works by induction over the number of components $C$. 

First, for $C=1$, $f(x,\theta) = \varphi_1\circ f_1(x,\theta)$ and $\F = \varphi_1\circ\F_1$. In this case, the contraction principle of Lemma~\ref{lem:contraction} in Appendix~\ref{sec:tools} yields
$$
	\mathcal{R}_N(\mathcal{F}) \leq \overline{\varphi}_1 \cdot \mathcal{R}_N(\mathcal{F}_1) .
$$
Then, Lemma~\ref{lem:radpseudolinear}  gives $\mathcal{R}_N(\mathcal{F}_1)\leq \tau_1\Lambda_1/\sqrt{N}$ and the result follows. 

Assume now that the statement holds for $C$ components, i.e., for $\F = \F^{C}$:
\begin{equation}\label{eq:recursiveass}
	\mathcal{R}_N(\mathcal{F}^C) \leq \sum_{k=1}^C \frac{(\prod_{j=k}^C \overline{\rho}_j)\overline{\varphi}_k\tau_k \Lambda_k}{\sqrt{N}}.
\end{equation}
Then, we can show that it also holds for $\F^{C+1}$. To see this, first apply the contraction principle again to obtain
\begin{align*}
	\mathcal{R}_N(\mathcal{F}^{C+1}) &= \mathcal{R}_N(\rho_{C+1}\circ \G_{C+1}( \F^{C} ,\ \varphi_{C+1}\circ\F_{C+1})) 
	\\&\leq \overline{\rho}_{C+1}\mathcal{R}_N( \G_{C+1}( \F^{C} ,\ \varphi_{C+1}\circ\F_{C+1})) .
\end{align*}
Then, it remains to show that 
\begin{align}\label{eq:proofGC+1}
	\mathcal{R}_N(\G_{C+1}(\F^{C}, \varphi_{C+1}\circ\F_{C+1})) \leq &\ \mathcal{R}_N(\F^{C})\\&\ + \mathcal{R}_N(\varphi_{C+1}\circ\F_{C+1})\nonumber
\end{align}
for all choices of the binary operator $g_{C+1}$ to conclude using~\eqref{eq:recursiveass} and $\mathcal{R}_N(\varphi_{C+1}\circ\F_{C+1}) \leq \overline{\varphi}_{C+1}\cdot\mathcal{R}_N(\F_{C+1})$ with, by Lemma~\ref{lem:radpseudolinear}, $\mathcal{R}_N(\F_{C+1})\leq \tau_{C+1}\Lambda_{C+1}/\sqrt{N}$.

If $g_{C+1}$ is the addition, then a basic property of Rademacher complexities (inherited from the subadditivity of the supremum and the linearity of the expectation), namely that $\mathcal{R}_N(\mathcal{U}+\mathcal{V})\leq \mathcal{R}_N(\mathcal{U}) + \mathcal{R}_N(\mathcal{V})$, suffices to prove~\eqref{eq:proofGC+1}. 

Since the random variables $\sigma_i$ and  $-\sigma_i$ share the same distribution, we have $\mathcal{R}_N(-\mathcal{V}) = \mathcal{R}_N(\mathcal{V})$ and~\eqref{eq:proofGC+1} is also proved for the case $\G_{C+1}(\mathcal{U} , \mathcal{V})=\mathcal{U} - \mathcal{V}$.

For the min and max operators, we can follow Lemma~9.1 in \cite{Mohri18} and rewrite $g_{C+1}(a,b) = \frac{1}{2}(a+b + s |a-b|)$ with $s=1$ for the $\max$ and $s=-1$ for the $\min$. Then, 
$$
	\mathcal{R}_N(\G_{C+1}(\mathcal{U},\mathcal{V})) \leq \frac{1}{2}\left[ \mathcal{R}_N( \mathcal{U}+\mathcal{V}) + \mathcal{R}_N(w\circ(\mathcal{U}-\mathcal{V})) \right]
$$
for the function $w : t\mapsto s|t|$ of Lipschitz constant equal to 1. Thus, using the contraction principle again and the results of the two previous cases, we obtain 
\begin{align*}
	\mathcal{R}_N(\G_{C+1}(\mathcal{U},\mathcal{V})) &\leq \frac{1}{2} \left[\mathcal{R}_N( \mathcal{U})+\mathcal{R}_N(\mathcal{V})\right. \\
		&\qquad \left. + \mathcal{R}_N(\mathcal{U})+\mathcal{R}_N(\mathcal{V})\right]\\
		& =\mathcal{R}_N(\mathcal{U})+\mathcal{R}_N(\mathcal{V})
\end{align*}
and thus~\eqref{eq:proofGC+1}.

Therefore, for any choice of $g_{C+1}$, 
\begin{align*}
	\mathcal{R}_N&(\mathcal{F}^{C+1}) \leq \overline{\rho}_{C+1}\left(\mathcal{R}_N(\F^{C}) + \mathcal{R}_N(\varphi_{C+1}\circ\F_{C+1}) \right)\\
	&\leq \overline{\rho}_{C+1}\left(\sum_{k=1}^C \frac{(\prod_{j=k}^C \overline{\rho}_j)\overline{\varphi}_k\tau_k \Lambda_k}{\sqrt{N}} + \overline{\varphi}_{C+1} \frac{\tau_{C+1}\Lambda_{C+1}}{\sqrt{N}}\right)\\
	&= \sum_{k=1}^{C+1} \frac{(\prod_{j=k}^{C+1} \overline{\rho}_j)\overline{\varphi}_k\tau_k \Lambda_k}{\sqrt{N}} 
\end{align*}
and Theorem~\ref{thm:radboolean} is proved by induction for all $C\geq 1$.

\section{Proof of Theorem~\ref{thm:posterior}}
\label{sec:proofposterior}

Define 
$$
	R=\sum_{k=1}^C \left(\prod_{j=k}^C \overline{\rho}_j\right)\overline{\varphi}_k \sqrt{\sum_{i=1}^N \|\psi_k(\theta_i)\|^2}
$$
and, for any $\alpha>9/2$, let $p=2\alpha/9$, $\zeta(p)$ denote the Riemann zeta function and
\begin{equation}\label{eq:proofepsilon}
	\epsilon = 3\sqrt{\frac{\log \frac{2+2\zeta(p)}{\delta} }{ 2N}},
\end{equation}
$$
	\epsilon_{x} = \epsilon + \sqrt{\frac{\alpha\log\log_2 2\overline{\Lambda}(x) }{N}} .
$$ 
We will show that, for any $\delta\in(0,1)$, 
\begin{equation}\label{eq:proofthm3}
	P^N\left\{\forall x\in\X,\ V_{\gamma}(x) \leq \hat{V}_{\gamma}(x) + \frac{4 \overline{\Lambda}(x)R}{\gamma N} + \epsilon_{x}\right\}\geq 1-\delta ,
\end{equation}
from which Theorem~\ref{thm:posterior} is derived by choosing $\alpha=9$ and noting that $\zeta(2)=\pi^2/6<2$. 

For any subset $\X_j\subseteq\X$, let $\F(\X_j) = \{f_x\in\R^{\Theta} : f_x(\theta) = f(x,\theta),\ x\in\X_j\}$. 
Following the proof of Theorem~\ref{thm:rad} while using the second inequality of Theorem~\ref{thm:general} instead of the first yields
\begin{align}\nonumber
	P^N&\!\left\{\exists x\in\X_j, V_{\gamma}(x)> \hat{V}_{\gamma}(x) + \frac{2}{\gamma}\hat{\mathcal{R}}_N(\mathcal{F}(\X_j)) + 3\sqrt{\frac{\log \frac{2}{\delta}}{2N}} \right\} \\
	&\quad \leq \delta.\label{eq:proofposterior1}
\end{align}
Then, Theorem~\ref{thm:radboolean} and Corollary~\ref{cor:centered} can be reshaped to yield
\begin{align}\label{eq:proofemprad}
	\hat{\mathcal{R}}_N(\mathcal{F}(\X_j)) \leq \sum_{k=1}^C & \left(\prod_{k'=k}^C \overline{\rho}_{k'}\right)\overline{\varphi}_k  \sup_{x\in\X_j} \|\phi_k(x) - \phi_{0,k}\| \\
		& \times\frac{\sqrt{\sum_{i=1}^N \|\psi_k(\theta_i)\|^2}}{N} . \nonumber .
\end{align}
To see this, note that their proofs hold verbatim with the empirical version of the Rademacher complexities and all $\tau_k/\sqrt{N}$ replaced by $\sqrt{\sum_{i=1}^N \|\psi_k(\theta_i)\|^2}/N$, as given by Lemma~\ref{lem:radpseudolinear}. 

Now, let $l_j = 2^j$ and, for any integer $j\geq 1$, $\epsilon_{j} = \epsilon + \sqrt{\alpha \log\log_2 (l_j) /N}$ and 
\begin{equation}\label{eq:proofXj}
	\X_{j} = \{x\in\X : \Lambda(x) = \max_{k\in\{1,\dots,C\}}\|\phi_k(x) - \phi_{0,k}(x)\| \leq l_j \}
\end{equation}
with $\overline{\Lambda}(x) = \max\{1,\Lambda(x)\}$. 
For any $x\in\X$, either $x\in\X_{0}$ with $\overline{\Lambda}(x) = 1 >1/2$ and $\epsilon_{x} = \epsilon$, or there is a $j\geq 1$ such that $x\in\X_{j}$ with $\overline{\Lambda}(x) = \Lambda(x)$, $l_j\geq \Lambda(x) > l_{j-1} = l_j/2$ and $\epsilon_{x} > \epsilon_{j}$. 
Thus,
\begin{align*}
	P^N\!&\left\{\exists x\in\X,\ V_{\gamma}(x) > \hat{V}_{\gamma}(x) + \frac{4 \overline{\Lambda}(x)R}{\gamma N} + \epsilon_{x}\right\}\\
	&= P^N\!\left\{\exists j\geq 0, x\in\X_{j}, V_{\gamma}(x) > \hat{V}_{\gamma}(x)+\!\frac{4 \overline{\Lambda}(x)R}{\gamma N} + \epsilon_{x}\right\}\\
	&\leq P^N\left\{x\in\X_{0},\ V_{\gamma}(x) > \hat{V}_{\gamma}(x) + \frac{2R}{\gamma N} + \epsilon\right\}
	\\
		&\quad + P^N\!\left\{\exists j\geq 1, x\in\X_{j}, V_{\gamma}(x) > \hat{V}_{\gamma}(x) + \frac{2 l_j R}{\gamma N} + \epsilon_{j}\right\}.
\end{align*}
For $\X_j$ as in~\eqref{eq:proofXj}, the bound~\eqref{eq:proofemprad} yields
$$
	\hat{\mathcal{R}}_N(\mathcal{F}(\X_j)) \leq  \frac{l_j R}{N} 
$$
and thus \eqref{eq:proofposterior1} gives, for $j=0$, 
$$
P^N\left\{x\in\X_{0},\ V_{\gamma}(x) > \hat{V}_{\gamma}(x) + \frac{2R}{\gamma N} + \epsilon\right\} \leq 2e^{-2N\epsilon^2/9}
$$
and, for any $j\geq 1$, 
$$
	P^N\left\{ x\in\X_{j},\ V_{\gamma}(x) > \hat{V}_{\gamma}(x) + \frac{2 l_j R}{\gamma N} + \epsilon_{j}\right\}\leq 2e^{-2N\epsilon_j^2/9}.
$$
Since $l_j = 2^j$, we have 
$$
	\epsilon_{j}^2 = \left(\epsilon + \sqrt{\frac{\alpha\log j}{N}}\right)^2 \geq \epsilon^2 + \frac{\alpha\log j}{N} .
$$ 
Thus, with $p=2\alpha/9$, 
\begin{align*}
	e^{-2N\epsilon_j^2/9}&\leq e^{-2N(\epsilon^2 + \alpha\log (j)/N)/9}\\
	& = e^{-2N\epsilon^2/9} e^{-p\log (j)} = \frac{e^{-2N\epsilon^2/9}}{j^p}.
\end{align*}
Applyng the union bound, this leads to 
\begin{align*}
P^N&\left\{\exists j\geq 1, x\in\X_{j},\ V_{\gamma}(x) > \hat{V}_{\gamma}(x) + \frac{4 \overline{\Lambda}(x)R}{\gamma N} + \epsilon_{x}\right\}\\
&\qquad\qquad\leq 2\sum_{j\geq 1} e^{-2N\epsilon_j^2/9} \\
&\qquad\qquad\leq 2 e^{-2N\epsilon^2/9}\sum_{j\geq 1} \frac{1}{j^p} = 2\ \zeta(p) e^{-2N\epsilon^2/9}.
\end{align*}
Therefore, 
\begin{align*}
	P^N\left\{\exists x\in\X,\ V_{\gamma}(x) > \hat{V}_{\gamma}(x) + \frac{4 \overline{\Lambda}(x)R}{\gamma N} + \epsilon_{x}\right\}&\\
	\leq (2+2 \zeta(p))& e^{-2N\epsilon^2/9},
\end{align*}
which, by recalling the value of $\epsilon$ in~\eqref{eq:proofepsilon}, is precisely~\eqref{eq:proofthm3}.

\section{Proof of Theorem~\ref{thm:covering}}
\label{sec:proofcovering}

Throughout this section, we consider a slightly different margin loss function:
\begin{equation}\label{eq:lossinf}
	\ell_{\gamma}(x,\theta) = \I{f(x,\theta) > -\gamma} \geq \I{f(x,\theta) > 0} . 
\end{equation}
We will first prove the following more general result: with probability at least $1-\delta$, for all $x\in\X$, 
\begin{equation}\label{eq:proofcoveringres}
	V(x) \leq V_{\gamma}(x)\leq  \hat{V}_{\gamma}(x) + 2\sqrt{ \hat{V}_{\gamma}(x)\frac{M + \log\frac{4}{\delta}}{N}} +  4\frac{M + \log\frac{4}{\delta}}{N}
\end{equation}
with $V_{\gamma}(x)$ and $\hat{V}_{\gamma}(x)$ computed using~\eqref{eq:lossinf} and 
$$
	M = 144 \sum_{k=1}^C M_k^2 \log(60 M_k N)
$$
with
$$
	M_k = \frac{2^{p_k}  \overline{\varphi}_k\tau_k\Lambda_k \prod_{j=k}^C \overline{\rho}_j}{\gamma}.
$$
Then, Theorem~\ref{thm:covering} will follow by assuming that $f(x,\theta_i)\leq -\gamma$, $i=1,\dots,N$, which ensures that  $\hat{V}_{\gamma}(x) = 0$ in~\eqref{eq:proofcoveringres}. 

This result relies on the following capacity measure. 

\begin{definition}[$\epsilon$-nets and covering numbers]
Given a function class $\F\subset\R^{\Theta}$ and a sequence $\g \theta_N \in \Theta^N$, consider the empirical pseudo-metric defined by
$$
	\forall (f,f^\prime)\in\left(\R^{\Theta}\right)^2,\quad d_{\g \theta_N}(f,f^\prime) = \max_{1\leq i\leq N} |f(\theta_i) - f^\prime(\theta_i)|.
$$
Then, an {\em $\epsilon$-net} of $\F$ is any set $\F'\subset\R^{\Theta}$ such that $\forall f\in\F$, $d_{\g \theta_N}(f,\F') = \min_{f'\in\F'} d_{\g \theta_N}(f,f')< \epsilon$ and the {\em covering number} $\mathcal{N}(\epsilon, \F, \g\theta_N)$ at scale $\epsilon$ of $\F$ is the cardinality of its smallest $\epsilon$-net. 
{\em Uniform covering numbers} are defined by
$$
	\mathcal{N}(\epsilon, \F, N) = \sup_{\g \theta_N\in \Theta^N} \mathcal{N}(\epsilon, \F, \g \theta_N ).
$$
\end{definition}

We will also require the following contraction lemma.
\begin{lemma}\label{lem:contractioncovering}
Let $\varphi: R \to\R$ be a $\overline{\varphi}$-Lipschitz continuous function over its domain $R\subset\R$, i.e., $\forall u,v\in R$, $|\varphi(u)-\varphi(v)|\leq \overline{\varphi}|u-v|$. Then, for all $\epsilon>0$,  
$$
	\N(\epsilon, \varphi\circ\F, N) \leq \N\left(\frac{\epsilon}{\overline{\varphi}}, \F, N\right).
$$
\end{lemma}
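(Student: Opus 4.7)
The proof is a standard direct argument showing how a cover of $\F$ transports through the Lipschitz map $\varphi$ to produce a cover of $\varphi\circ\F$.

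Fix an arbitrary sequence $\g\theta_N\in\Theta^N$ and abbreviate $d = d_{\g\theta_N}$. Let $\F'\subset\R^{\Theta}$ be an $(\epsilon/\overline{\varphi})$-net of $\F$ of minimal cardinality, so $|\F'|=\N(\epsilon/\overline{\varphi},\F,\g\theta_N)$. I would then introduce $\varphi\circ\F'=\{\varphi\circ f':f'\in\F'\}$ as the candidate cover of $\varphi\circ\F$, so that in particular $|\varphi\circ\F'|\leq |\F'|$.

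The key step is to verify that $\varphi\circ\F'$ is indeed an $\epsilon$-net of $\varphi\circ\F$ with respect to $d$. Pick any $g=\varphi\circ f\in\varphi\circ\F$. By construction of $\F'$, there exists $f'\in\F'$ with $d(f,f')<\epsilon/\overline{\varphi}$, i.e.\ $|f(\theta_i)-f'(\theta_i)|<\epsilon/\overline{\varphi}$ for $i=1,\dots,N$. The Lipschitz hypothesis then yields, for each $i$,
$$
	|\varphi(f(\theta_i))-\varphi(f'(\theta_i))| \leq \overline{\varphi}\,|f(\theta_i)-f'(\theta_i)| < \epsilon,
$$
hence $d(\varphi\circ f,\varphi\circ f')<\epsilon$. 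Therefore $\N(\epsilon,\varphi\circ\F,\g\theta_N)\leq|\varphi\circ\F'|\leq\N(\epsilon/\overline{\varphi},\F,\g\theta_N)$, and taking the supremum over $\g\theta_N\in\Theta^N$ on both sides gives the claim.

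The only mild technical wrinkle is that $\varphi$ is a priori defined only on $R\subset\R$, while the approximants $f'\in\F'$ may take values outside $R$, so the expression $\varphi(f'(\theta_i))$ is not automatically meaningful. The standard fix is to invoke the McShane extension $\tilde\varphi:\R\to\R$ defined by $\tilde\varphi(t)=\sup_{u\in R}\{\varphi(u)-\overline{\varphi}\,|t-u|\}$, which coincides with $\varphi$ on $R$ and is $\overline{\varphi}$-Lipschitz on all of $\R$; the argument above then applies verbatim to $\tilde\varphi$ and restricts back to $\varphi$ on $\varphi\circ\F$. This is really the only conceptual point in the proof; the contraction inequality itself is immediate from the definitions of the pseudo-metric $d_{\g\theta_N}$ and of an $\epsilon$-net.
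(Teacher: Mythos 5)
Your proof is correct and follows essentially the same argument as the paper: push a minimal $(\epsilon/\overline{\varphi})$-net of $\F$ through $\varphi$, use the Lipschitz property pointwise at each $\theta_i$ to contract the pseudo-metric, and take the supremum over $\g\theta_N$. Your additional remark about extending $\varphi$ from $R$ to all of $\R$ via the McShane extension addresses a technical point that the paper's proof leaves implicit, and is a welcome clarification rather than a deviation.
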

\begin{proof}
For any $\g\theta_N=(\theta_i)_{1\leq i\leq N}$, let $\hat{\F}$ be a minimal $\epsilon/\overline{\varphi}$-cover of $\F$ with cardinality $\N(\epsilon/\overline{\varphi},\F, \g\theta_N)$. Then, for any $f\in\F$, there is a $\hat{f}\in\hat{\F}$ such that
\begin{align*}
	d_{\g\theta_N}(\varphi\circ f,\varphi\circ\hat{f}) &= \max_{1\leq i\leq N} |\varphi(f(\theta_i)) - \varphi(\hat{f}(\theta_i))|\\
		&\leq \max_{1\leq i\leq N} \overline{\varphi}|f(\theta_i) - \hat{f}(\theta_i)| \\
		&\leq \overline{\varphi}d_{\g\theta_N}( f,\hat{f})  < \epsilon
\end{align*}
and $\varphi\circ\hat{\F}$ is an $\epsilon$-net of $\varphi\circ\F$ of cardinality $\N(\epsilon/\overline{\varphi},\F, \g\theta_N)$. Taking the supremum over all $\g\theta_N$ concludes the proof.
\end{proof}

The following lemmas will provide bounds on the covering numbers of $\varphi_k\circ\F_k$ and of the combination of two classes.

\begin{lemma}\label{lem:coverpseudolinear}
Given functions $\psi:\Theta\times \R^{n}$, $\phi:\Theta\to\R^n$, $\eta:\Theta\to\R$ and a $\overline{\varphi}$-Lipschitz continuous function $\varphi: \R \to\R$, the uniform covering numbers of the class $\F = \{f_x \in \R^{\Theta}: f_x(\theta) = \varphi\left( \psi(\theta)^\top \phi(x) + \eta(\theta) \right),\ x\in\X \}$ are bounded, for all $\epsilon\in(0,\tau\Lambda]$, by
$$
	\log \N(\epsilon,\mathcal{F}, N) \leq  \frac{36 \overline{\varphi}^2 \tau^2 \Lambda^2}{\epsilon^2} \log \frac{15\overline{\varphi}\tau\Lambda N}{\epsilon}, 
$$
where $\tau = \sup_{\theta\in\Theta} \|\psi(\theta)\|$ and $\Lambda = \sup_{x\in\X}\|\phi(x)\|$. 
\end{lemma}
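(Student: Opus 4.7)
The plan is to peel off the two non-linear ingredients (the outer function $\varphi$ and the additive term $\eta$) and then invoke a standard covering-number bound for linear function classes in a Hilbert space.

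First, I would apply Lemma~\ref{lem:contractioncovering} to the outer Lipschitz function $\varphi$. Writing $\F = \varphi\circ\G$ with $\G = \{g_x\in\R^{\Theta} : g_x(\theta) = \psi(\theta)^\top\phi(x) + \eta(\theta),\ x\in\X\}$, the contraction lemma immediately gives
$$
\log\N(\epsilon,\F,N) \leq \log\N(\epsilon/\overline{\varphi},\G,N).
$$

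Second, I would strip off $\eta$. Since $\eta$ does not depend on $x$, for any $x,x'\in\X$ and any sample $\g\theta_N\in\Theta^N$,
$$
d_{\g\theta_N}(g_x,g_{x'}) = \max_{1\leq i\leq N}|\psi(\theta_i)^\top(\phi(x)-\phi(x'))|,
$$
so adding the fixed function $\eta$ to every element of an internal cover of the pure linear class $\H = \{h_x:\theta\mapsto\psi(\theta)^\top\phi(x),\ x\in\X\}$ produces an internal cover of $\G$ of the same size at the same scale. Hence $\N(\epsilon',\G,N) \leq \N(\epsilon',\H,N)$ for every $\epsilon'>0$, and it suffices to bound the uniform covering numbers of the purely linear class $\H$.

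Third, I would invoke a Hilbert-space covering-number bound for linear function classes, such as the one given by Zhang in \cite{Zhang02}. At any fixed $\g\theta_N$, $\H$ is a class of linear forms indexed by vectors $\phi(x)$ of norm at most $\Lambda$, acting on data vectors $\psi(\theta_i)$ of norm at most $\tau$. The resulting bound yields a control of the form $\log\N(\epsilon',\H,N) \leq (36\tau^2\Lambda^2/(\epsilon')^2)\log(15\tau\Lambda N/\epsilon')$ for $\epsilon'\in(0,\tau\Lambda]$. Setting $\epsilon' = \epsilon/\overline{\varphi}$ then delivers the claim.

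The main obstacle will be matching exactly the numerical constants appearing in the statement ($36$ in the leading factor and $15$ inside the logarithm). Zhang's bound is typically stated with a ceiling on $36\tau^2\Lambda^2/(\epsilon')^2$ and a slightly different arrangement of the log-factor (involving additive constants such as $4\tau\Lambda/\epsilon' + 2$); absorbing the ceiling and these additive constants into the constant $15$ requires some careful bookkeeping, and this is precisely where the restriction $\epsilon\leq \tau\Lambda$ in the hypothesis becomes useful, since it ensures $4\tau\Lambda/\epsilon' + 2 \leq 15\tau\Lambda/\epsilon'$ and similar simplifications. Apart from this constant tracking, every step is mechanical.
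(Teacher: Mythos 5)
Your proposal follows essentially the same route as the paper's proof: peel off $\varphi$ with the covering-number contraction lemma, observe that the $x$-independent shift $\eta$ leaves the empirical pseudo-metric (and hence the covering numbers) unchanged, and then apply Theorem~4 of \cite{Zhang02} to the remaining linear class, absorbing the ceiling and additive terms of Zhang's bound into the constant $15$ using $\epsilon'\leq\tau\Lambda$. The argument and the constant bookkeeping match the paper's, so no changes are needed.
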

\begin{proof}
Let $\tilde{\F} =  \{f_x \in \R^{\Theta}: f_x(\theta) =  \psi(\theta)^\top \phi(x) + \eta(\theta) ,\ x\in\X \}$  such that $\F = \varphi\circ \tilde{\F}$. Then, the result follows from Lemma~\ref{lem:contractioncovering} and the following bound on the covering numbers of $\tilde{\F}$.

For all $\epsilon\in(0,\tau\Lambda]$, Theorem~4 in \cite{Zhang02} yields an $\epsilon$-net of $\F' = \{f'_x \in \R^{\Theta}: f'_x(\theta) = \psi(\theta)^\top \phi(x),\ x\in\X \}$ with cardinality $n_1$ such that
\begin{align*}
	\log n_1 &\leq \frac{36\tau^2 \Lambda^2}{\epsilon^2} \log \left(2N\left\lceil \frac{4\tau\Lambda}{\epsilon} +2\right\rceil + 1\right) \\
		&< \frac{36\tau^2 \Lambda^2}{\epsilon^2} \log \frac{15\tau\Lambda N}{\epsilon}.
\end{align*}
For all $\hat{f}'_x$ in this $\epsilon$-net of $\F'$, let $\hat{f}_x = \hat{f}'_x + \eta$ and note that any function $f_x\in\tilde{\F}$ can be decomposed as $f_x = f'_x + \eta$. Thus, for any $f_x\in\tilde{\F}$, there is an $\hat{f}_x$ such that 
\begin{align*}
	d(f_x, \hat{f}_x) &= \max_{1\leq i\leq N} |f'_x(\theta_i) + \eta(\theta_i) - \hat{f}'_x(\theta_i) - \eta(\theta_i)|\\
		& = \max_{1\leq i\leq N} |f'_x(\theta_i) - \hat{f}'_x(\theta_i)|< \epsilon
\end{align*}
and the set of functions $\hat{f}_x$ thus built forms an $\epsilon$-net of $\tilde{\F}$. Since the cardinality of this set is $n_1$, we conclude that $\N(\epsilon,\tilde{\F}, N)\leq n_1$.
\end{proof}

\begin{lemma}\label{lem:binarycovering}
Let $\F_1$ and $\F_2$ be subsets of $\R^{\Theta}$ and $\G(\F_1,\F_2)$ be either $\F_1+\F_2$, $\F_1-\F_2$, $\max(\F_1,\F_2)$ or $\min(\F_1,\F_2)$. 
Then, for any $\epsilon>0$,
$$
	\N(\epsilon,\G(\F_1,\F_2), N) \leq \N\left(\frac{\epsilon}{2^a},\F_1, N\right) \N\left(\frac{\epsilon}{2^a},\F_2, N\right) ,
$$
where $a$ is $1$ if $\G= \F_1+\F_2$ or $\G= \F_1-\F_2$ and $0$ otherwise.
\end{lemma}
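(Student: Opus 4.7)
The plan is to construct explicit $\epsilon$-nets of $\G(\F_1,\F_2)$ by combining minimal nets of $\F_1$ and $\F_2$ at a suitable scale, and then take a supremum over samples to transfer the bound to uniform covering numbers. First I would fix $\g\theta_N\in\Theta^N$ and let $\hat{\F}_1$ and $\hat{\F}_2$ be minimal $(\epsilon/2^a)$-nets of $\F_1$ and $\F_2$ for the empirical pseudo-metric $d_{\g\theta_N}$. I would then form the candidate net $\hat{\G} = \{ g(\hat{f}_1,\hat{f}_2) : \hat{f}_1\in\hat{\F}_1,\ \hat{f}_2\in\hat{\F}_2\}$, where $g$ denotes the underlying binary operation, so that $|\hat{\G}| \leq |\hat{\F}_1|\,|\hat{\F}_2|$. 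What remains is to show that for every $f = g(f_1,f_2)\in\G(\F_1,\F_2)$ there exists $\hat f\in\hat\G$ with $d_{\g\theta_N}(f,\hat f) < \epsilon$.

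For $g(a,b) = a \pm b$ (hence $a=1$), I would pick $\hat f_1,\hat f_2$ witnessing the covering property at scale $\epsilon/2$, and then invoke the triangle inequality pointwise:
\[
\bigl|g(f_1(\theta_i),f_2(\theta_i)) - g(\hat f_1(\theta_i),\hat f_2(\theta_i))\bigr| \leq |f_1(\theta_i)-\hat f_1(\theta_i)| + |f_2(\theta_i)-\hat f_2(\theta_i)| < \epsilon,
\]
which justifies the factor $2$. For $g=\max$ or $g=\min$ (hence $a=0$), the key observation is that these operators are $1$-Lipschitz in the $\ell_\infty$ norm on pairs, i.e., $|g(a,b) - g(a',b')| \leq \max(|a-a'|,|b-b'|)$. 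I would verify this by a short case analysis (for $\max$: assume without loss of generality $\max(a,b)\geq \max(a',b')$, then split on whether $a\geq b$ or $b>a$ and conclude $\max(a,b)-\max(a',b')\leq a-a'$ or $\leq b-b'$ respectively; the $\min$ case is symmetric). This contraction property ensures that nets of $\F_1$ and $\F_2$ at the full scale $\epsilon$ already generate an $\epsilon$-net of $\G(\F_1,\F_2)$, sparing the factor $2$.

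Finally, I would take the supremum over $\g\theta_N\in\Theta^N$ on both sides to pass from sample-dependent covering numbers to uniform covering numbers. I do not anticipate any real obstacle; the only subtlety is recognizing the $\ell_\infty$-contraction property of $\max$ and $\min$, which is precisely what enables the improved covering bound in the statement relative to the additive case.
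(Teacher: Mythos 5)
Your proposal is correct and follows essentially the same route as the paper: form the product net $\{g(\hat f_1,\hat f_2)\}$ from minimal nets of $\F_1$ and $\F_2$, use the triangle inequality to get the factor $2^a$ for sums and differences, and use the $\ell_\infty$-contraction of $\max$ and $\min$ to avoid it. The only difference is cosmetic: where you verify the contraction property $|g(a,b)-g(a',b')|\leq \max(|a-a'|,|b-b'|)$ by a direct case analysis, the paper delegates that case to a citation of Lemma~13 of \cite{Lauer20}.
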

\begin{proof}
Let $\hat{\F}_1$ and $\hat{\F}_2$ denote minimal $\epsilon$-nets of $\F_1$ and $\F_2$ of cardinality $\N(\epsilon,\F_1, N)$ and $\N(\epsilon,\F_2, N)$, respectively. Then, for any $f_1\in\F_1$ and $f_2\in\F_2$, there are $\hat{f}_1\in\hat{\F}_1$ and $\hat{f}_2\in\hat{\F}_2$ such that
\begin{align*}
	d_{\g\theta_N}(f_1+f_2,&\ \hat{f}_1+\hat{f}_2) \\
	&= \max_{1\leq i\leq N} |f_1(\theta_i) + f_2(\theta_i) -\hat{f}_1(\theta_i) - \hat{f}_2(\theta_i)| \\
	& \leq \max_{1\leq i\leq N} |f_1(\theta_i)  -\hat{f}_1(\theta_i)| + | f_2(\theta_i) - \hat{f}_2(\theta_i)| \\
	&\leq d_{\g\theta_N}(f_1, \hat{f}_1) + d_{\g\theta_N}( f_2, \hat{f}_2) < 2\epsilon.
\end{align*}
Thus, $\{\hat{f}_1+\hat{f}_2,\ \hat{f}_1\in\hat{\F}_1, \hat{f}_2\in\hat{\F}_2\}$ is a $(2\epsilon)$-net of $\F_1+\F_2$ of cardinality no larger than $|\hat{\F}_1|\cdot|\hat{\F}_2|$, and the result follows by rescaling $\epsilon$ and setting $a=1$. 

The same argument yields $d_{\g\theta_N}(f_1-f_2, \hat{f}_1-\hat{f}_2) < 2\epsilon$, and the result for $\F_1-\F_2$ with again $a=1$. The proof for $\max(\F_1,\F_2)$ and $\min(\F_1,\F_2)$ can be found in Lemma~13 of \cite{Lauer20}, where the scale $\epsilon$ is not affected by the operation and $a$ can be set to $0$.
\end{proof}

We are now ready to formulate the bound on the covering numbers of the class of functions $f$ as in~\eqref{eq:fminmax}, which will play a role similar to Theorem~\ref{thm:radboolean} in the proof of~\eqref{eq:proofcoveringres}.
\begin{lemma}\label{lem:proofcoveringmain}
Let $\F$ be a function class as in~\eqref{eq:F} with $f$ of the form~\eqref{eq:fminmax}. Then,
\begin{align*}
	\log \N(\epsilon,\mathcal{F}, N) \leq  \frac{36}{\epsilon^2} \sum_{k=1}^C & 4^{p_k}  \left(\prod_{j=k}^C \overline{\rho}_j\right)^2\overline{\varphi}_k^2 \tau_k^2 \Lambda_k^2 \\
	& \times \log \frac{15\cdot 2^{p_k} (\prod_{j=k}^C \overline{\rho}_j) \overline{\varphi}_k\tau_k\Lambda_k N}{\epsilon}, 
\end{align*}
where $\tau_k = \sup_{\theta\in\Theta} \|\psi_k(\theta)\|$, $\Lambda_k =\sup_{x\in\X} \|\phi_k(x)\|$, $\overline{\rho}_1=1$, $\overline{\rho}_k$ for $k\geq 2$ and $\overline{\varphi}_k$ for $k\geq 1$ are the Lipschitz constants of the functions $\rho_k$ and $\varphi_k$, respectively, and $p_k=\sum_{j=k}^C a_j$ with $a_j = 1$ if the binary operation $g_j$ is a sum or a difference and $0$ otherwise. 
\end{lemma}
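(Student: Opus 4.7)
My plan is to prove the lemma by induction on $C$ using the recursive formulation~\eqref{eq:recursiveF}, in the spirit of the proof of Theorem~\ref{thm:radboolean}, but with covering numbers in place of Rademacher complexities. The three ingredients are already available: Lemma~\ref{lem:contractioncovering} absorbs the outer $\rho_k$ without rescaling except by $1/\overline{\rho}_k$, Lemma~\ref{lem:binarycovering} splits the cover of $g_k(\mathcal U,\mathcal V)$ into covers of $\mathcal U$ and $\mathcal V$ at scale $\epsilon/2^{a_k}$ (where $a_k=1$ iff $g_k$ is a sum or difference), and Lemma~\ref{lem:coverpseudolinear} provides a closed-form bound for each base class $\varphi_k\circ\F_k$.

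The main calculation is a careful bookkeeping of the scale at which each base class $\varphi_k\circ\F_k$ ends up being covered. If we require an $\epsilon$-cover of $\F^C$, then one application of the contraction lemma followed by the binary-cover lemma at level $k$ reduces the problem to covering both $\F^{k-1}$ and $\varphi_k\circ\F_k$ at scale $s_{k-1} = s_k/(2^{a_k}\overline{\rho}_k)$, starting from $s_C=\epsilon$. Unrolling the recurrence yields
$$
s_{k-1}=\frac{\epsilon}{2^{p_k}\prod_{j=k}^{C}\overline{\rho}_j},\qquad p_k=\sum_{j=k}^{C}a_j,
$$
and, with the conventions $\overline{\rho}_1=1$ and $a_1=0$ (so that $p_1=p_2$), this also gives the scale at which $\F^1=\varphi_1\circ\F_1$ is covered when we hit the base of the recursion.

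Because $\log$ of a product is the sum of logs, the iterated application of Lemmas~\ref{lem:contractioncovering} and~\ref{lem:binarycovering} yields
$$
\log\N(\epsilon,\F,N)\ \leq\ \sum_{k=1}^{C}\log\N\!\left(s_{k-1},\varphi_k\circ\F_k,N\right).
$$
Plugging Lemma~\ref{lem:coverpseudolinear} into each summand and substituting the closed form of $s_{k-1}$ above produces exactly the stated bound, provided each scale $s_{k-1}$ lies in the admissible range $(0,\overline{\varphi}_k\tau_k\Lambda_k]$ of that lemma (which can be arranged by restricting $\epsilon$ suitably or by noting that the bound is vacuous outside this range since the right-hand side would exceed a trivial uniform covering bound).

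The main obstacle is simply the bookkeeping: making sure the factor $2^{p_k}$ collects only the contributions of the $g_j$'s with $j\geq k$ that are sums or differences (so that max/min operators do not inflate the scale), and that the contraction factor $\prod_{j=k}^{C}\overline{\rho}_j$ propagates correctly through every level above $k$. The $k=1$ endpoint requires the cosmetic conventions $\overline{\rho}_1=1$ and $a_1=0$ so that the formula at the base of the recursion matches the general term of the sum, after which the proof reduces to substitution into Lemma~\ref{lem:coverpseudolinear} and collecting constants.
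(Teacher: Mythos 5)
Your proposal is correct and follows essentially the same route as the paper: the paper formalizes the argument as an induction on $C$, applying Lemma~\ref{lem:contractioncovering}, Lemma~\ref{lem:binarycovering} and Lemma~\ref{lem:coverpseudolinear} at each level and updating $p_k \to p_k + a_{C+1}$, which is exactly the recurrence $s_{k-1}=s_k/(2^{a_k}\overline{\rho}_k)$ that you unroll explicitly. Your bookkeeping of the scales, the conventions $\overline{\rho}_1=1$ and $a_1=0$, and the resulting substitution into Lemma~\ref{lem:coverpseudolinear} all match the paper's computation.
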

\begin{proof}
Consider the notations defined in~\eqref{eq:recursiveF}. For $C=1$, $\F= \varphi_1\circ \F_1$ and Lemma~\ref{lem:coverpseudolinear} yields the desired result (note that $p_1$ is always zero in this case). 

Assume now that the statement holds for $\F^C$. Then, Lemma~\ref{lem:contractioncovering} yields
\begin{align*}
	\log &\ \N(\epsilon,\mathcal{F}^{C+1}, N) \\&\leq \log \N\left(\frac{\epsilon}{\overline{\rho}_{C+1}},\mathcal{G}_{C+1}(\F^C, \varphi_{C+1}\circ\F_{C+1}), N\right). 
\end{align*}
Let $\epsilon' = \epsilon/\overline{\rho}_{C+1}$. 
Then, Lemma~\ref{lem:binarycovering} gives
\begin{align*}
	\log &\ \N(\epsilon',\mathcal{G}_{C+1}(\F^C, \varphi_{C+1}\circ\F_{C+1}), N) \\
	&\leq \log \N\left(\frac{\epsilon'}{2^{a_{C+1}}},\F^C, N\right) \\
	&\quad + \log \N\left(\frac{\epsilon'}{2^{a_{C+1}}}, \varphi_{C+1}\circ\F_{C+1}, N\right),
\end{align*}
with $a_{C+1}=0$ if $g_{C+1}$ is the maximum or minimum operator and $a_{C+1}=1$ if $g_k$ is a sum or a difference. Using this with the assumption on the covering numbers of $\F^{C}$ and Lemma~\ref{lem:coverpseudolinear} further leads to
\begin{align*}
	\log\ &\N(\epsilon',\mathcal{G}_{C+1}(\F^C, \varphi_{C+1}\circ\F_{C+1}), N) 
	\\&\leq \frac{36 \cdot 4^{a_{C+1}}}{\epsilon'^2} \sum_{k=1}^C 4^{p_k} \left(\prod_{j=k}^C \overline{\rho}_j\right)^2\overline{\varphi}_k^2 \tau_k^2 \Lambda_k^2 \\
			&\qquad\qquad\qquad\ \times \log \frac{15\cdot 2^{a_{C+1}}\cdot 2^{p_k} (\prod_{j=k}^C \overline{\rho}_j) \overline{\varphi}_k\tau_k\Lambda_k N}{\epsilon'}\\
	&\quad 	+ \frac{36 \cdot 4^{a_{C+1}}  \overline{\rho}_{C+1}^2\overline{\varphi}_{C+1}^2 \tau_{C+1}^2 \Lambda_{C+1}^2}{\epsilon'^2} \\
	&\qquad\quad \times \log \frac{15\cdot 2^{a_{C+1}}  \overline{\rho}_{C+1} \overline{\varphi}_{C+1}\tau_{C+1}\Lambda_{C+1} N}{\epsilon'}.
\end{align*}
By replacing $\epsilon'$ by $\epsilon/\overline{\rho_{C+1}}$ and letting $p_k$ take its new value $p_k+a_{C+1}$, we conclude that the statement holds for $f^{C+1}$.
\end{proof}

Now, the proof of~\eqref{eq:proofcoveringres} (and thus Theorem~\ref{thm:covering}) stems from the relative deviation bounds of \cite{Cortes21}. 
In particular, by Corollary~5 of \cite{Cortes21},  
with probability at least $1-\delta$, 
\begin{align*}
	V(x) \leq V_{\gamma}(x) & \leq  \hat{V}_{\gamma}(x) \\
	&\quad + 2\sqrt{ \hat{V}_{\gamma}(x)\frac{\log  \mathcal{N}( \frac{\gamma}{2},\F_{\gamma}, 2N) + \log\frac{4}{\delta}}{N}} \\
	&\quad +  4\frac{\log \mathcal{N}( \frac{\gamma}{2}, \F_{\gamma},2N) + \log\frac{4}{\delta}}{N},
\end{align*}
where $\F_{\gamma}$ is the class of functions from $\F$ clipped at $\gamma$: 
\begin{align*}
	\F_{\gamma} = \{f_{x,\gamma} &\ \in [-\gamma,\gamma]^{\Theta} :\\
	& f_{x,\gamma}(\theta) = \max\{ -\gamma, \min\{ \gamma, f_x(\theta)\}\},\ f_x\in\F\}.
\end{align*}
Since clipping $\F$ amounts to composing its functions with the $1$-Lipschitz function $\max\{ -\gamma, \min\{ \gamma, \cdot\}\}$, Lemma~\ref{lem:contractioncovering} ensures that $\mathcal{N}(\frac{\gamma}{2},\F_{\gamma},  2N)\leq \mathcal{N}(\frac{\gamma}{2}, \F, 2N)$. Then, Lemma~\ref{lem:proofcoveringmain} applied with $\epsilon=\gamma/2$ gives the result~\eqref{eq:proofcoveringres}.

\end{document}